\pgfplotsset{compat=1.18}
\newtheorem{theorem}{Theorem}[section]
\newtheorem{lemma}[theorem]{Lemma}
\newtheorem{proposition}{Proposition}[section]
\newtheorem{corollary}[theorem]{Corollary}
\newtheorem{definition}{Definition}
\newtheorem{conjecture}{Conjecture}
\DeclarePairedDelimiter\ceil{\lceil}{\rceil}
\DeclarePairedDelimiter\floor{\lfloor}{\rfloor}
\newcommand{\CC}{\mathcal{CT}}
\begin{document}

\begin{center}
{\large \bf Optimal Behaviour in Extremal Bounds for $\sigma$-Irregularity}
\end{center}
\begin{center}
 Jasem Hamoud$^1$ \hspace{0.2 cm}  Duaa Abdullah$^{2}$\\[8pt]
 $^{1,2}$ Moscow Institute of Physics and Technology\\[6pt]
Email: 	 $^{1}${\tt hamoud.math@gmail.com},
 $^{2}${\tt duaa1992abdullah@gmail.com}
\end{center}
\noindent
\begin{abstract}
In this paper, we establish extremal bounds for the Sigma index $\sigma(G) = \sum_{uv \in E(G)} (d_G(u) - d_G(v))^2$, focusing on sharp upper and lower bounds via a non-increasing degree sequence $\mathscr{D} = (d_1 \geq d_2 \geq \dots \geq d_n)$. A fundamental challenge in studying topological indices is deriving precise bounds, as such results reveal intrinsic relationships among various indices.

\end{abstract}

\noindent\textbf{AMS Classification 2010:} 05C05, 05C12, 05C20, 05C25, 05C35, 05C76, 68R10.

\noindent\textbf{Keywords:} Topological indices, Extremal, Irregularity, Sigma index, Bounds, Trees.

\section{Introduction}\label{sec1}

Throughout this paper. Let $G=(V,E)$ be a simple, connected graph, where $V(G)=\{v_1,v_2,\dots,v_n\}$, $|n=|V(G)|$ and $E(G)=\{e_1,e_2,\dots,e_m\}$, $m=|E(G)|$.
Let $\mathscr{D}=(d_1,d_2,\dots,d_{n-1},d_n)$ be a degree sequence  where $d_w=d_G(v_w)$ the degree of vertex $v_w\in V(G)$ and consider $\mathscr{D}$ is non-increasing. 
S.~L.~Hakimi~\cite{Hakimi1962} had provided the concept of realizability as a set of integers with degrees of the vertices of a graph. 
Among~\cite{Caro2014Pepper} had presented a study of degree sequence index strategy.

In~\cite{Schmidt1976Druffel,Baskin1987Gordeeva} had presented the methodology for solving the inverse problem of topological indexes. One of this topological indexes called the \emph{Albertson index} was first introduced in 1997 by M.~O.~Albertson~\cite{albertson1997irregularity}. I.~Gutman~\cite{Gutman2018I} had presented topological indices and irregularity measures where many equivalent formulas for Albertson index considered as the concept of \emph{irregularity} were presented in~\cite{abdo2014total,Abdo2018Dimitrov,Albalahi2023Alanazi,Alex2024Indulal,Akg2019Naca,Arif2023Hayat}. 
Generally, Albertson index is defined as
\[
\operatorname{irr} (G)=\sum_{uv\in E(G)}|d_G(u)-d_G(v)|.
\]
This measure is not the only possible. In fact, R.~Criado et al.~\cite{Criado2014Flores} emphasize that irregularity index is
\[
\operatorname{irr}(G)=\frac{1}{n}\sum_{i=1}^{n}\left|  d_i-\frac{2m}{n}\right|.
\]
Z.~Che and Z.~Chen~\cite{Che2016chenZZ} had established new mathematical lower and upper bounds for the \emph{forgotten topological index} $F(G)$ where the forgotten topological index had provided by Furtula and Gutman~\cite{Furtula2015Gutman} and defined of a molecular graph $G$ as
\[
F(G)=\sum_{uv\in E(G)}\left( d_G(u)^2+d_G(v)^2\right).
\]
In~\cite{Fajtlowicz1987,Mateji2018Milovanovi,Sigarreta2020jM,Vukicevic2009Furtula} had established that the \emph{harmonic topological index} $H(G)$. It inherently prioritizes edges linking vertices of lower bound, as the ratio attains greater magnitude when the aggregate degree sum is minimal. Z.~Lingping~\cite{Lingping2012} defined $H(G)$ as
\[
H(G)=\sum_{uv\in E(G)}\frac{2}{d_G(u)+d_G(v)}.
\]
The rational extension of the quantum harmonic oscillator and exceptional Hermite polynomials had presented in~\cite{Gómez2013Grandati}. 
Topological indices study of molecular structure~\cite{Gao2016WangMR} such that one of the topological indices which was associated with irregularity~\cite{Abdo2015Dimitrov} in the graph theory called  \emph{Sigma index}, denoted by $\sigma$. 
Relationships between other topological indices had presented through~\cite{Jahanbai2021Sheikholeslami} and Sigma index introduced by~\cite{gutman2018inverse,Albalahi2023Alanazi,Jahanbanı2019Ediz,Ascioglu2018Cangul} , which is defined as
\[
\sigma(G)=\sum_{uv\in E(G)}(d_G(u)-d_G(v))^2.
\]
Closely related to Sigma index had introduced by~\cite{Criado2014Flores} as
\[
\sigma(G)=\frac{1}{n}\sum_{i=1}^{n}\left( d_i-\frac{2m}{n}\right)^2.
\]
There are many studies that have examined the bounds on the Albertson index, some of which have been related to the sigma index, such as~\cite{Abdo2018Dimitrov,abdo2014total,abdo2019graph,albertson1997irregularity, Gutman2018I,Arif2023Hayat, Chen2026Liu}. Through~\cite{Ascioglu2018Cangul} provided a study of Sigma index and forgotten index. In~\cite{Chartrand2010Okamoto} had discussed specific issues of Sigma called \emph{sigma chromatic number}. The \emph{$\sigma_t$-irregularity} (or sigma total index) of a graph $G$ is defined~\cite{Knor2025RQW,Dimitrov2023Stevanović} as
\[
\sigma_t(G) = \sum_{\{u,v\} \subseteq V(G)} (d_G(u)-d_G(v))^2,
\]
some results on $\sigma_ {t} $-irregularity had given through~\cite{Filipovski2024Dimitrov}. This index has been studied in the context of extremal graph theory, yielding bounds and characterizations of graphs maximizing $\sigma_t$-irregularity.

A fundamental challenge in the study of topological indices lies in establishing precise bounds, as such findings illuminate intrinsic relationships among diverse indices. Given that topological indices serve as mathematical descriptors closely linked to the chemical attributes of compounds, pinpointing the compound exhibiting extremal behavior for a particular property is tantamount to the problem of optimizing a strongly correlated topological index within the designated class of graphs.

The main goal of this paper is to study the optimal behavior of the Sigma index by evaluating its upper and lower bounds along with some elicited topological indices. Since many important studies have been derived from the Albertson index and the Sigma index, we see it as important to determine the closest bounds for these indices.

This paper is organized as follows. In Section~\ref{sec3}, we presented the upper bound of Sigma index for a degree sequence $\mathscr{D}$ order such that $d_1\geqslant d_2 \geqslant \dots \geqslant d_n$. Section~\ref{sec4} discussed the lower bounds of Sigma index among degree sequence. In Section~\ref{sec5}, we presented the effects of this bounds. 
 \section{Preliminaries}\label{sec2}
In this section, we introduce several essential concepts to ensure the reader fully comprehends the results presented in this paper. Several studies have introduced the concept of bounds on topological indices~\cite{Oboudi2019MR,Sarasija2017Binthiya}. Therefore, we will review some bounds on topological indices that contribute to strengthening our study regarding extremal bounds. 
\begin{definition}[Zagreb Indices~\cite{gutman1972total, gutman1975acyclic,Vasilyev2014Darda,Yang2021Arockiaraj}]
Let $ G=(V, E) $ be a graph. The first and second Zagreb indices are given by
\[
M_1(G) = \sum_{v \in V(G)} d_G(v)^2, \quad M_2(G) = \sum_{uv \in E(G)} d_G(u)\,.\,d_G(v).
\]
\end{definition}
In the following Theorem~\ref{thmforggotn1}, we provide the relationship with the second Zagreb index, the forgotten indx and Sigma index.
\begin{theorem}[\cite{Ascioglu2018Cangul}]~\label{thmforggotn1}
For any connected graph $G$ we have $\sigma(G)=F(G)-2M_2(G)$.
\end{theorem}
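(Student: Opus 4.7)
The plan is to prove the identity by direct algebraic expansion on the edge level, and then regroup the resulting sums to match the definitions of $F(G)$ and $M_2(G)$. Since all three indices are defined as sums ranging over the edge set $E(G)$, I do not need to reindex anything or to change the domain of summation, which keeps the manipulation entirely elementary.

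First, I would write, for each edge $uv \in E(G)$, the binomial identity
\[
(d_G(u)-d_G(v))^2 = d_G(u)^2 + d_G(v)^2 - 2\,d_G(u)\,d_G(v).
\]
Then I would sum both sides over all $uv \in E(G)$. The left-hand side becomes $\sigma(G)$ by definition. On the right-hand side, linearity of summation splits it into two pieces: $\sum_{uv \in E(G)}(d_G(u)^2+d_G(v)^2)$, which is exactly $F(G)$, and $-2\sum_{uv \in E(G)} d_G(u)d_G(v)$, which is exactly $-2M_2(G)$. Combining these yields $\sigma(G) = F(G) - 2M_2(G)$.

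The main (in fact the only) obstacle is really a matter of bookkeeping rather than mathematics: I must make sure that the summation conventions used in the definitions of $\sigma$, $F$, and $M_2$ genuinely agree, in particular that each edge is counted exactly once in all three sums (as opposed to the ordered-pair convention that would double everything). Given that the definitions stated earlier in the paper all use $uv \in E(G)$, this is immediate, so the theorem reduces to a single line of expansion and no further work is needed. Connectedness of $G$ plays no role in the argument; the identity holds for any simple graph.
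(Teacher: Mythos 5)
Your proof is correct and is exactly the standard argument: expand $(d_G(u)-d_G(v))^2$ edgewise and regroup to obtain $F(G)-2M_2(G)$; the paper itself states this result with only a citation and gives no proof, so there is nothing further to compare against. Your observation that connectedness is not needed is also accurate.
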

During our study of the Sigma index through this paper, we will not use the traditional definition but will instead use the relation obtained from Theorem~\ref{basic01sigma}, which is associated with degree sequence $\mathscr{D}$ among caterpillar tree $\CC$. We consider caterpillar trees denoted by $\CC(n, m)$, where $n$ is the number of backbone (or path) vertices and $m$ is the number of pendant vertices attached to each.  

\begin{theorem}[\cite{HamoudwithDuaa}]~\label{basic01sigma}
Let $\CC$ be a caterpillar tree with a degree sequence
$\mathscr{D}=(d_1,\dots,d_n),$ where $d_1\geqslant d_2\geqslant \dots
\geqslant d_n$. Then, Sigma index among caterpillar tree $\CC$ is
\[
\sigma(\CC)=(d_n-1)^3+(d_1-1)^3+\sum_{i=1}^{n}(d_i-d_{i+1})^2+\sum_{i=2}^{n-1}(d_i-1)^2(d_i-2).
\]
\end{theorem}
For a duplicate star graph $\mathcal{S}_{r,k}$, Theorem~\ref{thmpirlllkj} provide us the relationship of Sigma index with special terms.
\begin{theorem}[\cite{gutman2018inverse}]~\label{thmpirlllkj}
Let $\mathcal{S}_{r,k}$ be a duplicate star graph where $\deg_{\mathcal{S}_{r,k}}(u)=k$ and $d_{\mathcal{S}_{r,k}}(v)=r$. Then, 
\[
\sigma(\mathcal{S}_{r,k})=(k-1)^3+(r-1)^3+(k-r)^2.
\]
\end{theorem}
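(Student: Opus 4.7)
The plan is to prove this by a direct edge-partition calculation from the definition $\sigma(G)=\sum_{uv\in E(G)}(d_G(u)-d_G(v))^2$, so I would not invoke Theorem~\ref{thmforggotn1} or Theorem~\ref{basic01sigma}, since for a duplicate star the structure is rigid enough that counting edges by type is cleaner than manipulating the degree-sequence formula. First I would fix notation: let $u,v$ be the two adjacent centers of $\mathcal{S}_{r,k}$ with $d(u)=k$ and $d(v)=r$, so that $u$ is joined to $v$ together with $k-1$ pendant vertices, and $v$ is joined to $u$ together with $r-1$ pendant vertices. Each pendant has degree $1$, and there are no other edges.

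Next I would split $E(\mathcal{S}_{r,k})$ into three disjoint classes: the $k-1$ pendant edges incident to $u$, the $r-1$ pendant edges incident to $v$, and the single central edge $uv$. Each pendant edge incident to $u$ contributes $(k-1)^2$ to the sum, giving a total of $(k-1)\cdot(k-1)^2=(k-1)^3$; similarly the pendant edges at $v$ contribute $(r-1)\cdot(r-1)^2=(r-1)^3$; and the central edge $uv$ contributes $(k-r)^2$. Adding these three quantities yields exactly
\[
\sigma(\mathcal{S}_{r,k})=(k-1)^3+(r-1)^3+(k-r)^2,
\]
as claimed.

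The proof is essentially bookkeeping, so there is no genuine obstacle; the only subtlety worth flagging is that the statement implicitly assumes $k,r\geqslant 1$ and that neither center is itself a pendant of the other side — equivalently, $\mathcal{S}_{r,k}$ has exactly two nonleaf vertices. If $k=1$ or $r=1$ the graph collapses to an ordinary star (or to $K_2$ when $k=r=1$), and the formula still holds by direct substitution, which I would note as a brief remark at the end of the proof.
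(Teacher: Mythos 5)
Your computation is correct: partitioning $E(\mathcal{S}_{r,k})$ into the $k-1$ pendant edges at $u$, the $r-1$ pendant edges at $v$, and the central edge $uv$ gives contributions $(k-1)^3$, $(r-1)^3$, and $(k-r)^2$ respectively, which is exactly the claimed formula, and your closing remark on the degenerate cases $k=1$ or $r=1$ checks out (e.g.\ $r=1$ collapses to the star $K_{1,k}$ with $\sigma=k(k-1)^2=(k-1)^3+(k-1)^2$). Note that the paper itself offers no proof of this statement --- it is quoted from the cited reference \cite{gutman2018inverse} as a preliminary --- so there is no in-paper argument to compare against; your direct edge-partition calculation is the standard and natural self-contained verification.
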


Let $\mathcal{T}_{n,p}$ be a tree of $n$ vertices and $p$ pendent vertices. Consider the caterpillar tree $\CC\in \mathcal{T}_{n,p}$. In this study we refer to caterpillar tree  $\CC$ by considering Figure~\ref{figTypCater01}.
\begin{figure}[H]
    \centering
\begin{tikzpicture}[scale=.8]
\draw   (1,2)-- (3,2);
\draw   (3,2)-- (5,2);
\draw   (7,2)-- (9,2);
\draw [line width=1pt,dotted] (5,2)-- (7,2);
\draw [line width=1pt,dotted] (3,2)-- (3,0);
\draw [line width=1pt,dotted] (5,2)-- (5,0);
\draw [line width=1pt,dotted] (7,2)-- (7,0);
\draw [line width=1pt,dotted] (9,2)-- (9,0);
\draw [line width=1pt,dotted] (1,2)-- (1,0);
\draw (0.8399938252334064,2.999767639650283) node[anchor=north west] {$d_1$};
\draw (2.816853283710542,2.974744102201205) node[anchor=north west] {$d_2$};
\draw (4.918830429433066,2.9246970273030497) node[anchor=north west] {$d_3$};
\draw (6.745548663215736,2.974744102201205) node[anchor=north west] {$d_{n-1}$};
\draw (8.722408121692872,3.0498147145484378) node[anchor=north west] {$d_n$};
\draw (9.097761183429038,0.6) node[anchor=north west] {$p$};
\draw (7.1709487998500565,0.6) node[anchor=north west] {$p$};
\draw (5.119018729025687,0.6) node[anchor=north west] {$p$};
\draw (3.1421592705485515,0.6) node[anchor=north west] {$p$};
\draw (0.2,0.6) node[anchor=north west] {$p$};
\begin{scriptsize}
\draw [fill=black] (1,2) circle (1.5pt);
\draw [fill=black] (3,2) circle (1.5pt);
\draw [fill=black] (5,2) circle (1.5pt);
\draw [fill=black] (7,2) circle (1.5pt);
\draw [fill=black] (9,2) circle (1.5pt);
\end{scriptsize}
\end{tikzpicture}
    \caption{Caterpillar tree $\CC(n,p)$.}
    \label{figTypCater01}
\end{figure}
In next lemma, X.~Sun et al.~\cite{Sun2025DuJMei} had provided an important upper bound of Sigma index. It establishes the following section.
\begin{lemma}[\cite{Sun2025DuJMei}]
A tree $T$ of order $n \ge 5$ with $p$ pendent vertices, where $3 \leq p \leq n-2$, satisfies $\sigma(T) \leq (p-1)^{3} + (p-2)^{2} + 1.$
Equality holds if and only if $T \cong \mathcal{T}_{n,p}$.
\end{lemma}

\section{Upper Bound of Sigma Index among Caterpillar Trees}\label{sec3}
In this section, the upper bound of the Sigma index is characterized by encompassing several bounds. 
We observe through Proposition~\ref{powPronu1} a confinement of the Sigma index within the interval $[\frac{1}{3}\, , \frac{1}{10}]$ by considering the term $2n^2-1$. Proposition~\ref{powPronu2} presented the constant upper bound related to $n^3+m^2$.

\begin{proposition}~\label{powPronu1}
Let $\CC$ be a caterpillar tree, $\mathscr{D}=(d_1,d_2,\dots,d_n)$ be a degree sequence where $d_1\geqslant d_2 \geqslant \dots \geqslant d_n$. Then, Sigma index satisfy
\begin{equation}~\label{eq1powPronu1}
3(n^2-1)-t_0+(d_{n-1}-d_n)^3\leqslant \sigma(\CC)\leqslant 10(n^2-1)-t_0+(d_{n-1}-d_n)^3,
\end{equation}
where $t_0=2d_1^3-3d_1^2d_2+3d_1d_2^2-d_1^3$.
\end{proposition}
\begin{proof}
Assume $\mathscr{D}=(d_1,d_2,\dots,d_n)$ is a degree sequence with $d_1\geqslant d_2\geqslant\dots\geqslant d_n$ and maximum degree $\Delta>3$. Since $n>\Delta$ and $2n^3+m>4\Delta$, the expression $\frac{2n^3+m}{4\Delta}$ yields the Sigma index satisfying relation~\eqref{eq2powPronu1}, where $\Delta\geqslant d_1$:
\begin{equation}\label{eq2powPronu1}
\sigma(\CC)\geqslant\floor*{\frac{2n^3+m}{4\Delta}}.
\end{equation}
Let $k>0$ be an integer. By relation~\eqref{eq1powPronu1}, it holds that $k\leqslant n^2-\frac{1}{2}\leqslant k+1$. For $j\geqslant k$, relation~\eqref{eq2powPronu1} implies $4\Delta j+4\Delta\leqslant 2n^3+m\leqslant 4\Delta j.$ Thus,~\eqref{eq2powPronu1} yields
\begin{equation}\label{eq3powPronu1}
\floor*{\frac{2n^3+m}{4\Delta}}-\floor*{\frac{2n^2-1}{2}}\leqslant 4\Delta(k-j).
\end{equation}
Hence, $\sigma(\CC)>d_1^3+(d_1-d_2)^3$. For the term $d_1^3+(d_1-d_2)^3$, the inequality $(d_{n-1}-d_n)^3<d_1^3+(d_1-d_2)^3<\sum_{i=1}^nd_i^3$
implies $\sigma(\CC)\geqslant d_1^3+(d_1-d_2)^3-(d_{n-1}-d_n)^3$. Therefore,~\eqref{eq3powPronu1} gives
\begin{equation}\label{eq4powPronu1} 
   \sigma(\CC)\geqslant\frac{d_1^3+(d_1-d_2)^3-(d_{n-1}-d_n)^3}{4\Delta(k-j)}.
\end{equation}
In particular,
\begin{equation}\label{eq5powPronu1}
1\leqslant\frac{d_1^3+(d_1-d_2)^3-(d_{n-1}-d_n)^3}{4\Delta(k-j)}\leqslant 3.
\end{equation}
The relations~\eqref{eq4powPronu1} and~\eqref{eq5powPronu1}, together with the bound on the Sigma index in~\eqref{eq5powPronu1}, imply that~\eqref{eq1powPronu1} holds.
\end{proof}

\begin{proposition}~\label{powPronu2}
Let $\CC$ be a caterpillar tree and $\mathscr{D}=(d_1,d_2,\dots,d_n)$ be a degree sequence where $d_1\geqslant d_2 \geqslant \dots \geqslant d_n$ with the maximum degree $\Delta$. Then,
\begin{equation}~\label{eq1powPronu2}
0<\frac{1}{m-1}\left(\frac{2n^3+m^2}{\Delta(\Delta-1)^2}+\floor*{\frac{3nm}{7}}-\ceil*{\frac{3nm}{7}}\right)<3,
\end{equation}
if and only if $\sqrt{\sigma(\CC)}>n$.
\end{proposition}
\begin{proof}
Consider the degree sequence $\mathscr{D}=(d_1,d_2,\dots,d_n)$ where $d_1\geqslant d_2 \geqslant \dots \geqslant d_n$ and the maximum degree is $\Delta$, with $4\Delta \leqslant n$. Since $m=n-1$, we have $2n^3+m^2=2n^3+n^2-2n+1$, which implies $2n^3+n^2-2n+1>\Delta(\Delta-1)^2$. For certain values of $\Delta$, this yields a lower bound for the Sigma index.
\begin{equation}~\label{eq2powPronu2}
\sigma(\CC)\geqslant \frac{2n^3+n^2-2n+1}{\Delta(\Delta-1)^2}+2n.
\end{equation}
Thus, from~\eqref{eq2powPronu2} we have $\sigma(\CC)>2n$ and $\sqrt{\sigma(\CC)}>n$. Hence, in what follows we work under the assumption that the Sigma index satisfies equation~\eqref{eq1powPronu2}. We now examine the bounds and their relation to the Sigma index, and then establish how these bounds are connected to the Sigma index via~\eqref{eq1powPronu2}. Then,
\begin{equation}~\label{eq3powPronu2}
\sigma(\CC)\leqslant \floor*{\frac{3nm}{7}}+\ceil*{\frac{3nm}{7}}+2n^2+2,
\end{equation}
where $2n^2+2$ is the constant term. Since $\sigma(\CC)>2n^2+2$ implies that $\sigma(\CC)>\floor*{\frac{3nm}{7}}+\ceil*{\frac{3nm}{7}}.$ In this case, let $3<k<n/2$ and $k\in \mathbb{N}$. Then, according to~\eqref{eq3powPronu2} the lower bound of Sigma index satisfy
\begin{equation}~\label{eq4powPronu2}
\sigma(\CC)\geqslant \floor*{\frac{knm}{k+4}}+\ceil*{\frac{knm}{k+4}}.
\end{equation}
Thus, according to both lower bounds~\eqref{eq3powPronu2} and \eqref{eq4powPronu2} we find that 
\begin{align*}
\sigma(\CC) & \geqslant \floor*{\frac{knm}{k+4}}+\ceil*{\frac{knm}{k+4}}\\
& \geqslant \frac{1}{m-1} \left ( \floor*{\frac{knm}{k+4}}+\ceil*{\frac{knm}{k+4}} \right) \\
& \geqslant \frac{1}{m-1} \left ( \floor*{\frac{knm}{k+4}}+\ceil*{\frac{knm}{k+4}} +2n^2+2\right).
\end{align*}
Therefore, the lower bounds of Sigma index had discussed among~\eqref{eq4powPronu2} and \eqref{eq2powPronu2}. Thus, according to Proposition~\ref{powPronu1} we find that $\sigma(\CC)\geqslant \floor*{\frac{2n^3+m}{4\Delta}}$. Thus, it holds 
\begin{equation}~\label{eq5powPronu2}
n<\frac{1}{m-1} \left ( \floor*{\frac{knm}{k+4}}+\ceil*{\frac{knm}{k+4}} +2n^2+2\right)<n\Delta
\end{equation}
Since $2n^2+2$ is the constant term and  $\sigma(\CC)>2n^2+2$ according to~\eqref{eq5powPronu2} noticed that 
\begin{equation}~\label{eq6powPronu2}
 2n<\frac{2n^3+n^2-2n+1}{\Delta(\Delta-1)^2}<n\Delta.
\end{equation}
Thus, when $d_1\geqslant d_2 \geqslant \dots \geqslant d_n,$ we have $2n/(m-1)\approx 2$ and $n\Delta/(m-1)\leqslant n^2/\Delta$. Thus, 
\begin{equation}~\label{eq7powPronu2}
 0<\frac{1}{m-1}\left(\frac{2n^3+n^2-2n+1}{\Delta(\Delta-1)^2}\right)<3.
\end{equation}
Finally, according to~\eqref{eq3powPronu2} and \eqref{eq4powPronu2} we find that the difference $\floor*{\frac{3nm}{7}}-\ceil*{\frac{3nm}{7}}$ holds $\sigma(\CC)>\ceil*{\frac{3nm}{7}}-\floor*{\frac{3nm}{7}}$. Then, by combining this results with~\eqref{eq5powPronu2}, \eqref{eq6powPronu2} and \eqref{eq7powPronu2} we find that~\eqref{eq1powPronu2} holds. As desire.
\end{proof}

Assume $d_T(v_2) = d_T(v_3) = \dots = d_T(v_{n-1}) = k$. Then, the Sigma index satisfies $\sigma(\CC) = d_1(d_1 - k)^2 + d_n(k - d_n)^2$. Sharp upper and lower bounds for topological indices precisely characterize the deviation between these bounds and the actual index values, which constitutes our main objective. Lemma~\ref{ThmMaximumSigman2} establishes the relationship between the harmonic topological index and the Sigma index.

\begin{lemma}~\label{ThmMaximumSigman2}
Let $\CC$ be a caterpillar tree, $H(G)$ be the harmonic topological index and let $\eta$ be an integer, $\mathscr{D}=(d_1,d_2,\dots,d_n)$ be non-decreasing degree sequence with $\lambda_{\mathscr{D}}$ the average of elements $\mathscr{D}$. Then, the upper bound of Sigma index satisfy
\begin{equation}~\label{eq1ThmMaximumSigman2}
\sigma(\CC)\leqslant \floor*{\frac{2n^2}{3\lambda_{\mathscr{D}}}}+\dfrac{2^\eta(m-\Delta)^2}{5(n-1)^3}+(n-1)H(G).
\end{equation}
\end{lemma}
We observe that the conditions $3<\sqrt{nm/2\Delta(\Delta-1)}<4$ and $0<2nm/\Delta(\Delta-1)^2<2$ are also satisfied by the degree sequence $\mathscr{D}$ with $d_1\leqslant d_2 \leqslant \dots \leqslant d_n$. Consequently, these bounds are sharpened by one or more increments. Theorem~\ref{TnpowPronu4} establishes the initial upper bounds by analyzing the sum of squared differences between the degrees in $\mathscr{D}$, while taking into account the effect of the average degree.

\begin{theorem}~\label{TnpowPronu4}
Let $\CC$ be a caterpillar tree, and let $\mathscr{D} = (d_1, d_2, \dots, d_n)$ be a degree sequence where $d_1 \geqslant d_2 \geqslant \dots \geqslant d_n$, with maximum degree $\Delta$, minimum degree $\delta \geqslant 2$, and let $\lambda_{\mathscr{D}}$ be the average of $\mathscr{D}$. Then, the upper bound of the Sigma index satisfies
\begin{equation}~\label{eq1TnpowPronu4}
\sigma(\CC)\leqslant \sum_{i=1}^{n-1}\lambda_{\mathscr{D}}(d_i-d_{i+1})^3 +2(n^2+m^2)+3m+n+2.
\end{equation}
\end{theorem}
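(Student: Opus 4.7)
The starting point is the closed-form expression from Theorem~\ref{basic01sigma}:
\[
\sigma(T) = \sum_{i\in\{1,n\}}(d_i+1)(d_i-1)^2 + \sum_{i=2}^{n-1}(d_i+2)(d_i-1)^2 + \sum_{i=2}^{n-1}(d_i-d_{i+1})^2 + 2n-2.
\]
Call the three sums $A$, $B$, $C$ respectively. The plan is to bound $A+B$ by transferring its cubic-in-degree mass onto cubed consecutive differences, and to handle $C$ by promoting squares to cubes using integrality of the degree gaps.

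The first phase handles $A+B$. Each summand $(d_i+c)(d_i-1)^2$ is a cubic polynomial in $d_i$; expanding and summing against the handshake identity $\sum_i d_i = 2m = 2(n-1)$ and the Zagreb bound $M_1(T)=\sum_i d_i^2$ reduces $A+B$ to a combination of $\sum d_i^3$, $\sum d_i^2$, $\sum d_i$ and constants. I would then use the telescoping identity $d_i^3-d_{i+1}^3 = (d_i-d_{i+1})\bigl(d_i^2+d_id_{i+1}+d_{i+1}^2\bigr)$ iteratively, together with monotonicity $d_1\geq\dots\geq d_n$, to rewrite $\sum_i d_i^3$ as a sum over $(d_i-d_{i+1})^3$ plus residual boundary terms bounded by $\Delta \leq n-1$, with the average $\lambda_{\mathscr{D}}=2m/n$ providing the needed multiplicative factor on the cubed differences.

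The second phase handles $C$. Since $d_i-d_{i+1}$ is a nonnegative integer, $(d_i-d_{i+1})^2 \leq (d_i-d_{i+1})^3$ when the difference is positive and both sides vanish otherwise; as $\lambda_{\mathscr{D}}\geq 1$ for any tree with $n\geq 2$, this promotes $C$ into $\sum_{i=1}^{n-1}\lambda_{\mathscr{D}}(d_i-d_{i+1})^3$ (extending the summation to $i=1$ only enlarges the right-hand side). The third phase collects the polynomial residuals from phase one and the trailing $2n-2$ into the omnibus envelope $2(n^2+m^2)+3m+n+2$, where using $m=n-1$ verifies that the constants close up.

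The main obstacle is phase one: the cubic-in-degree terms can be as large as $\Theta(n^3)$ in the worst case (think of a star), whereas the residual envelope supplies only $O(n^2+m^2)$. The $\lambda_{\mathscr{D}}$-weighted cubed-differences term must absorb the dominant mass, so the telescoping has to be arranged carefully so that the bulk of $\sum d_i^3$ is converted into the $(d_i-d_{i+1})^3$ contribution rather than leaking into the polynomial slack, and the average $\lambda_{\mathscr{D}}$ must appear as the exact multiplier coming from applying the arithmetic mean bound $d_i \leq \lambda_{\mathscr{D}}\cdot(\text{length of the monotone chain above }d_i)$ during the rearrangement.
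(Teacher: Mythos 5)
Your sketch stalls exactly where you predicted it would, and the obstacle in phase one is not a technical nuisance but a fatal one: the telescoping $d_i^3-d_{i+1}^3=(d_i-d_{i+1})(d_i^2+d_id_{i+1}+d_{i+1}^2)$ collapses whenever consecutive degrees are equal, so $\sum_{i=1}^{n-1}(d_i-d_{i+1})^3$ retains essentially \emph{one} copy of $\Delta^3$ no matter how many vertices attain degree $\Delta$, while $\sum_i d_i^3$ (and $\sigma(T)$ itself) can contain arbitrarily many copies. Since $\lambda_{\mathscr{D}}=2m/n<2$ for every tree, the weighted cubed-difference term is capped near $2\Delta^3$, and the residual envelope $2(n^2+m^2)+3m+n+2$ is only quadratic, so no rearrangement of the cubic mass can close the inequality. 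Concretely, take the spider with three adjacent centers $u_1,u_2,u_3$ of degree $100$ (with $99$, $98$, $99$ pendant leaves respectively): then $n=299$, $m=298$, $\sigma(T)=296\cdot 99^2=2{,}901{,}096$, whereas the sorted degree sequence $(100,100,100,1,\dots,1)$ gives $\sum_{i=1}^{n-1}\lambda_{\mathscr{D}}(d_i-d_{i+1})^3=\tfrac{596}{299}\cdot 99^3\approx 1{,}934{,}108$ and $2(n^2+m^2)+3m+n+2=357{,}605$, so the right-hand side is about $2{,}291{,}713<\sigma(T)$. The inequality of Theorem~\ref{TnpowPronu4} is therefore false for trees in general (it survives as stated only because the hypothesis $\delta\geqslant 2$ is unsatisfiable for a tree, making the claim vacuous), and no completion of your phases can exist. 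Your phase-two observation ($(d_i-d_{i+1})^2\leqslant(d_i-d_{i+1})^3$ for nonnegative integers) is fine on its own, but it cannot rescue phase one.

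For comparison, the paper's own argument is not a proof you should try to emulate: it opens with the assertions $n>\sum_{i=1}^{n-1}(d_i-d_{i+1})^2$ and $2n>\sum_{i=1}^{n-1}(d_i-d_{i+1})^3$, both of which already fail for a star ($\sum(d_i-d_{i+1})^2=(n-2)^2$), and then proceeds through a chain of further unjustified inequalities. So the gap here is not merely in your write-up; the statement itself needs to be repaired (e.g.\ by replacing the consecutive-difference term with something that counts multiplicity of large degrees, such as $\sum_i d_i^3$ as in Theorem~\ref{TnpowPronu5}) before any proof strategy can succeed.
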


\begin{proof}
Assume $\mathscr{D}=(d_1,d_2,\dots,d_n)$ is a degree sequence where $d_1 \geqslant d_2 \geqslant \dots \geqslant d_n$, with maximum degree $\Delta$, minimum degree $\delta \geqslant 2$, and average degree $\lambda_{\mathscr{D}}$. Clearly, $n$ satisfies 
$n > \sum_{i=1}^{n-1}(d_i - d_{i+1})^2$ and $2n > \sum_{i=1}^{n-1}(d_i - d_{i+1})^3.$ 
These $n$-related terms establish the lower bounds for the Sigma index. Then
\begin{equation}~\label{eq2TnpowPronu4}
\sigma(\CC) \geqslant \sum_{i=1}^{n-1}\lambda_{\mathscr{D}} (d_i-d_{i+1})^3+2n^2+2.
\end{equation}
According to proposition~\ref{powPronu2}, we compare the term $2n^3+n^2-2n+1$ with the term $\Delta(\Delta-1)^2$ by considering the value of $m-1,$
\[
0 < \frac{1}{m-1}\left(\frac{2n^3 + n^2 - 2n + 1}{\Delta(\Delta - 1)^2}\right) < 3. 
\]
Based on this reduction, the lower bound~\eqref{eq2TnpowPronu4} is difficult to determine within a certain bound. Thus, we have $\Delta (n - \Delta)^2 - \Delta (m - \Delta)^2 \geqslant \sum_{i=1}^{n-1}\lambda_{\mathscr{D}} (d_i - d_{i+1})^3,$
and for the term $\Delta(\Delta - 1)^2$ compared with the term $(2m^2 + 2m + n) / 2(n + m)$, considering that $\sigma(\CC) < 2m^2 + 2m + n$ holds, we obtain
\[
(d_1 - d_n)^2 \geqslant \frac{1}{\Delta(\Delta - 1)^2} \left(\frac{2m^4 + 2m + n}{2(n + m)} \right).
\]
The sharp lower bound of the Sigma index satisfies the inequality by considering~\eqref{eq2TnpowPronu4} and the value of the term $(d_1 - d_n)^2$,
\begin{equation}~\label{eq3TnpowPronu4}
\sigma(\CC) > \frac{1}{\Delta(\Delta - 1)^2} \left(\frac{2m^4 + 2m + n}{2(n + m)} + 2nm + (d_1 - d_n)^2 \right).
\end{equation}
Thus, relationship~\eqref{eq3TnpowPronu4} implies that $\Delta (2n - m)^2$ satisfies the lower bound according to the value of $\lambda_{\mathscr{D}}$ and the upper bound according to the value $d_i^2$. Hence,
\begin{equation}~\label{eq4TnpowPronu4}
\sum_{i=1}^{n-1} \lambda_{\mathscr{D}} (d_i - d_{i+1})^3 \leqslant \frac{1}{4}\left( \frac{2m^4 + 2m + n}{2 \Delta (n + m)(2n - m)^2} + 2nm + (d_1 - d_n)^2 \right) \leqslant 3 \sum_{i=1}^n d_i^2 - (n + m),
\end{equation}
where
\[
2 \leqslant \frac{1}{4} \left( \frac{2m^4 + 2m + n}{2 \Delta (n + m)(2n - m)^2} \right) \leqslant 3.
\]
Therefore, from~\eqref{eq3TnpowPronu4} and \eqref{eq4TnpowPronu4}, the relationship with~\eqref{eq2TnpowPronu4} implies~\eqref{eq1TnpowPronu4}.
\end{proof}

Actually, by discussing the bounds in~\eqref{eq2TnpowPronu4}, \eqref{eq3TnpowPronu4}, and \eqref{eq4TnpowPronu4}, we obtain an equivalent upper bound to that given in Theorem~\ref{TnpowPronu4}. This bound follows directly from Theorem~\ref{TnpowPronu5} via the relationships in~\eqref{eq2TnpowPronu4}, \eqref{eq3TnpowPronu4}, and \eqref{eq4TnpowPronu4}.

\begin{theorem}~\label{TnpowPronu5}
Let $\CC$ be a caterpillar tree with degree sequence $\mathscr{D}=(d_1,d_2,\dots,d_n)$  where $d_1\geqslant d_2 \geqslant \dots \geqslant d_n$ and $\lambda_{\mathscr{D}}$ the average of $\mathscr{D}$. Then,according to Theorem~\ref{TnpowPronu4} the upper bound of Sigma index satisfy
\begin{equation}~\label{eq1TnpowPronu5}
\sigma(\CC)\leqslant \sum_{i=1}^{n}d_i^3+\frac{2\lambda_{\mathscr{D}}(n^2+m^2)}{3(n+m)}.
\end{equation}
\end{theorem}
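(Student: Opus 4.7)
The plan is to derive Theorem~\ref{TnpowPronu5} as a consolidation of the three intermediate inequalities~\eqref{eq2TnpowPronu4}, \eqref{eq3TnpowPronu4}, and \eqref{eq4TnpowPronu4} established inside the proof of Theorem~\ref{TnpowPronu4}, rather than starting afresh from the statement of that theorem (whose constant is too coarse to yield the sharper form). I would first tackle the cubic-difference sum on the right of Theorem~\ref{TnpowPronu4} and replace it with the raw sum of cubes $\sum_{i=1}^n d_i^3$. The tool is the elementary inequality $(d_i-d_{i+1})^3\leqslant d_i^3-d_{i+1}^3$, valid for any non-increasing sequence of nonnegative reals because $a^3-b^3=(a-b)(a^2+ab+b^2)\geqslant (a-b)^3$ when $a\geqslant b\geqslant 0$. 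Telescoping then gives $\sum_{i=1}^{n-1}(d_i-d_{i+1})^3\leqslant d_1^3-d_n^3\leqslant \sum_{i=1}^n d_i^3$, and after multiplying by $\lambda_{\mathscr{D}}$ and using that for a tree $\lambda_{\mathscr{D}}=2(n-1)/n<2$, the cubic-difference contribution in~\eqref{eq2TnpowPronu4} gets absorbed into the clean sum $\sum_{i=1}^n d_i^3$ appearing on the right of~\eqref{eq1TnpowPronu5}.

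The second step is to repackage the additive remainder $2(n^2+m^2)+3m+n+2$ of Theorem~\ref{TnpowPronu4} into the compact fraction $\frac{2\lambda_{\mathscr{D}}(n^2+m^2)}{3(n+m)}$. This compression is substantial, since with $m=n-1$ and $\lambda_{\mathscr{D}}\approx 2$ the target quantity is $\Theta(n)$ while the raw additive term is $\Theta(n^2)$, so the saving must come from inside~\eqref{eq3TnpowPronu4} and~\eqref{eq4TnpowPronu4}. The idea is to exploit the bracket $\tfrac{1}{4}\cdot\tfrac{2m^4+2m+n}{2\Delta(n+m)(2n-m)^2}\in[2,3]$ that appeared underneath~\eqref{eq4TnpowPronu4}: dividing by $(n+m)$ normalises the polynomial weight of the numerator, while the residual factor in $[2,3]$, multiplied by $\lambda_{\mathscr{D}}$, produces the rational constant $\tfrac{2}{3}\lambda_{\mathscr{D}}$ after averaging. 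Carrying the factor $(n^2+m^2)$ through~\eqref{eq3TnpowPronu4} then reconstructs precisely the fractional term in~\eqref{eq1TnpowPronu5}.

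The final part is to assemble the two reductions and confirm that the direction of the inequality is preserved at each step. The main obstacle I expect is the sign bookkeeping in the second step: \eqref{eq3TnpowPronu4} is a lower bound on $\sigma(T)$ whereas~\eqref{eq4TnpowPronu4} is written as a chain of upper bounds, so folding the additive constants back into one fraction requires inverting one inequality in a controlled manner and tracking where the intermediate approximations $\lambda_{\mathscr{D}}\approx 2$ and $\tfrac{2nm}{\Delta(\Delta-1)^2}\in(0,2)$ are genuinely tight. To guard against arithmetic slips, I would calibrate both sides on the extremal instances that motivated Theorem~\ref{TnpowPronu4} — the star $K_{1,n-1}$, the path $P_n$, and the duplicate star $\mathcal{S}_{r,k}$ via Theorem~\ref{thmpirlllkj} — before committing to the final algebraic manipulations.
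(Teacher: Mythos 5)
Your overall strategy mirrors what the paper itself does --- it offers no standalone proof of Theorem~\ref{TnpowPronu5} and simply asserts that it is a ``direct result'' of the relations \eqref{eq2TnpowPronu4}--\eqref{eq4TnpowPronu4} --- but your concrete first step contains a false inequality. From $(d_i-d_{i+1})^3\leqslant d_i^3-d_{i+1}^3$ you telescope to $\sum_{i=1}^{n-1}(d_i-d_{i+1})^3\leqslant d_1^3-d_n^3$ and then claim that multiplying by $\lambda_{\mathscr{D}}<2$ lets this be ``absorbed'' into $\sum_{i=1}^{n}d_i^3$. That absorption fails: for the star $K_{1,9}$ one has $\lambda_{\mathscr{D}}=1.8$ and $\sum_{i=1}^{n-1}(d_i-d_{i+1})^3=8^3=512$, so $\lambda_{\mathscr{D}}\sum_{i=1}^{n-1}(d_i-d_{i+1})^3=921.6$, while $\sum_{i=1}^{n}d_i^3=9^3+9=738$; even after adding the fractional term $\frac{2\lambda_{\mathscr{D}}(n^2+m^2)}{3(n+m)}\approx 11.4$ the target right-hand side is only about $749$, so your chain cannot close. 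Your second step has a more basic logical problem: \eqref{eq2TnpowPronu4} and \eqref{eq3TnpowPronu4} are \emph{lower} bounds on $\sigma(T)$, and \eqref{eq4TnpowPronu4} bounds the cubic-difference sum rather than $\sigma(T)$; no controlled ``inversion'' of one of these turns lower bounds on $\sigma(T)$ into an upper bound, and the proposed compression of the $\Theta(n^2)$ remainder $2(n^2+m^2)+3m+n+2$ into the $\Theta(n)$ fraction is asserted rather than derived. Calibrating on the star, the path, and $\mathcal{S}_{r,k}$ at the end cannot repair a step that is already false on the star.

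The statement itself admits a one-line proof that bypasses Theorem~\ref{TnpowPronu4} entirely: since $(x-y)^2\leqslant x^2+y^2$ for nonnegative $x,y$, we have $\sigma(T)\leqslant\sum_{uv\in E(T)}\left(d_T(u)^2+d_T(v)^2\right)=F(T)=\sum_{i=1}^{n}d_i^3$ (equivalently, $\sigma(T)=F(T)-2M_2(T)\leqslant F(T)$ by Theorem~\ref{thmforggotn1}, since $M_2(T)\geqslant 0$), and the additional term $\frac{2\lambda_{\mathscr{D}}(n^2+m^2)}{3(n+m)}$ is nonnegative. I would replace your two-step reduction with this argument.
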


The following result is similarly achieved for the upper bound of the Sigma index, making it a sharply distinct and clear extreme value. Therefore, the approximate study to determine a specific sharp value for the Sigma index may not be consistent.
\begin{corollary}~\label{Tercorollaryn1}
Let $\CC$ be a caterpillar tree. Then
\begin{align*}
& \sigma(\CC)\leqslant \sum_{i=1}^{n-1}\lambda_{\mathscr{D}}(d_i-d_{i+1})^3+2m^2+2m+n, \\
& \sigma(\CC)\leqslant \sum_{i=1}^{n-1}\lambda_{\mathscr{D}}(d_i-d_{i+1})^3+2n^2+2.
\end{align*}
\end{corollary}

\begin{corollary}~\label{Tercorollaryn02}
For any tree $T$ of order $n$. Let $F(G)$ be the forgotten topological index. Then, the upper bound of Sigma index satisfy
\begin{equation}~\label{eq1Tercorollaryn02}
\sigma(\CC)\leqslant 2F(G)+2n^2+2.
\end{equation}
\end{corollary}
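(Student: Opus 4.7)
The plan is to reduce the statement to Corollary~\ref{Tercorollaryn2}, which already gives
\[
\sigma(T)\leqslant \sum_{i=1}^{n-1}\lambda_{\mathscr{D}}(d_i-d_{i+1})^3+2n^2+2,
\]
and then bound the first summand by $2F(G)$. The principal tool will be the classical handshake-type identity
\[
F(G)\;=\;\sum_{uv\in E(G)}\!\bigl(d_G(u)^2+d_G(v)^2\bigr)\;=\;\sum_{v\in V(G)} d_G(v)^3,
\]
obtained by counting each term $d_G(v)^2$ once for every edge incident to $v$. For a tree this converts $F(G)$ into $\sum_{i=1}^n d_i^3$, which is precisely the shape needed to absorb the cubic differences $(d_i-d_{i+1})^3$.

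The key step is the following two-part estimate. First, since $T$ has $m=n-1$ edges, the average $\lambda_{\mathscr{D}}=\tfrac{2m}{n}=\tfrac{2(n-1)}{n}<2$, so $\lambda_{\mathscr{D}}$ can be replaced by $2$ in the inequality at no cost. Second, the assumption that $\mathscr{D}=(d_1,\dots,d_n)$ is non-increasing gives $0\leqslant d_i-d_{i+1}\leqslant d_i$ for every $i\in\{1,\dots,n-1\}$, so $(d_i-d_{i+1})^3\leqslant d_i^3$. Summing the latter bound and pairing it with the former yields
\[
\sum_{i=1}^{n-1}\lambda_{\mathscr{D}}(d_i-d_{i+1})^3\;\leqslant\;2\sum_{i=1}^{n-1}d_i^3\;\leqslant\;2\sum_{i=1}^{n}d_i^3\;=\;2F(G).
\]
Plugging this into Corollary~\ref{Tercorollaryn2} delivers the desired inequality $\sigma(T)\leqslant 2F(G)+2n^2+2$.

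There is essentially no hard obstacle here; the proof is a direct consequence of Corollary~\ref{Tercorollaryn2} combined with two elementary inequalities. The only subtlety worth spelling out is the identification $F(G)=\sum_v d_G(v)^3$, and the fact that $\lambda_{\mathscr{D}}<2$ uses specifically that $T$ is a tree (so $m=n-1$); on a general connected graph this bound on $\lambda_{\mathscr{D}}$ would fail, which is why the corollary is stated for trees. With those two observations the argument compresses into a couple of lines of estimation.
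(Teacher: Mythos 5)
Your proof is correct, but it takes a genuinely different route from the paper's. The paper's own proof is essentially a one-line consequence of Theorem~\ref{thmforggotn1}: since $\sigma(G)=F(G)-2M_2(G)$ (equivalently, $(d_G(u)-d_G(v))^2\leqslant d_G(u)^2+d_G(v)^2$ edge by edge), one gets $\sigma(T)\leqslant F(G)\leqslant 2F(G)$ directly, and the term $2n^2+2$ is simply an added positive slack. (The paper's write-up contains a sign typo, writing $\sigma(T)=F(G)+2M_2(G)$, but the intended argument is the displayed edge-wise inequality.) You instead route through Corollary~\ref{Tercorollaryn2}, converting $F(G)$ to $\sum_v d_G(v)^3$ via the handshake identity and absorbing $\sum_i\lambda_{\mathscr{D}}(d_i-d_{i+1})^3$ using $\lambda_{\mathscr{D}}=2(n-1)/n<2$ and $(d_i-d_{i+1})^3\leqslant d_i^3$. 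Your estimates are all valid, and your observation that $\lambda_{\mathscr{D}}<2$ genuinely uses $m=n-1$ is a fair point; the trade-off is that your argument inherits whatever confidence one places in Corollary~\ref{Tercorollaryn2} (stated in the paper without proof) and uses the tree hypothesis essentially, whereas the paper's argument is self-contained, more elementary, and would in fact prove $\sigma(G)\leqslant 2F(G)+2n^2+2$ for an arbitrary graph. Both deliver the stated bound.
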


\begin{proof}
According to the relationship of the forgotten topological index $F(G)$ and according to Theorem~\ref{thmforggotn1}, we noticed that $\sigma(\CC)=F(G)+\sum_{uv \in E(G)}2d_G(u)d_G(v)$. Therefore,
\[
\sum_{uv \in E(G)} (d_G(u)^2 + d_G(v)^2) > \sum_{uv \in E(G)} (d_G(u) - d_G(v))^2.
\]
Thus, $\sigma(\CC)\leqslant 2F(G)$ and the term $2n^2+2$ is confirmed of the upper bound of Sigma index.
\end{proof}

\section{Lower Bounds of Sigma Index among Caterpillar Trees}~\label{sec4}
In this section, The discussion previously presented in section~\ref{sec3} is continued, focusing on the analysis of the lower bound of the Sigma index. Both extreme values play a pivotal role in the behavior of the Sigma index, as illustrated in Figure~\ref{fig001discussion}. The maximum value of Sigma index satisfied with the left figure and the minimum value satisfied with the right figure. Note that the extremal value is attained for the degree sequence ordered by $d_n>d_1>\dots>d_2>d_{n-1}$ and $d_n>d_2>\dots>d_{n-1}>d_1$.

\begin{figure}[H]
    \centering
\begin{tabular}{cc}
\begin{tikzpicture}[scale=.8]
\draw   (1,2)-- (3,2);
\draw   (3,2)-- (5,2);
\draw   (7,2)-- (9,2);
\draw [line width=1pt,dotted] (5,2)-- (7,2);
\draw [line width=1pt,dotted] (3,2)-- (3,0);
\draw [line width=1pt,dotted] (5,2)-- (5,0);
\draw [line width=1pt,dotted] (7,2)-- (7,0);
\draw [line width=1pt,dotted] (9,2)-- (9,0);
\draw [line width=1pt,dotted] (1,2)-- (1,0);
\draw (0.8399938252334064,2.999767639650283) node[anchor=north west] {$d_n$};
\draw (2.816853283710542,2.974744102201205) node[anchor=north west] {$d_1$};
\draw (4.918830429433066,2.9246970273030497) node[anchor=north west] {$d_3$};
\draw (6.745548663215736,2.974744102201205) node[anchor=north west] {$d_2$};
\draw (8.722408121692872,3.0498147145484378) node[anchor=north west] {$d_{n-1}$};
\draw (9.097761183429038,0.6) node[anchor=north west] {$p$};
\draw (7.1709487998500565,0.6) node[anchor=north west] {$p$};
\draw (5.119018729025687,0.6) node[anchor=north west] {$p$};
\draw (3.1421592705485515,0.6) node[anchor=north west] {$p$};
\draw (0.2,0.6) node[anchor=north west] {$p$};
\begin{scriptsize}
\draw [fill=black] (1,2) circle (1.5pt);
\draw [fill=black] (3,2) circle (1.5pt);
\draw [fill=black] (5,2) circle (1.5pt);
\draw [fill=black] (7,2) circle (1.5pt);
\draw [fill=black] (9,2) circle (1.5pt);
\end{scriptsize}
\end{tikzpicture} & \begin{tikzpicture}[scale=.8]
\draw   (1,2)-- (3,2);
\draw   (3,2)-- (5,2);
\draw   (7,2)-- (9,2);
\draw [line width=1pt,dotted] (5,2)-- (7,2);
\draw [line width=1pt,dotted] (3,2)-- (3,0);
\draw [line width=1pt,dotted] (5,2)-- (5,0);
\draw [line width=1pt,dotted] (7,2)-- (7,0);
\draw [line width=1pt,dotted] (9,2)-- (9,0);
\draw [line width=1pt,dotted] (1,2)-- (1,0);
\draw (0.8399938252334064,2.999767639650283) node[anchor=north west] {$d_1$};
\draw (2.816853283710542,2.974744102201205) node[anchor=north west] {$d_{n-1}$};
\draw (4.918830429433066,2.9246970273030497) node[anchor=north west] {$d_3$};
\draw (6.745548663215736,2.974744102201205) node[anchor=north west] {$d_2$};
\draw (8.722408121692872,3.0498147145484378) node[anchor=north west] {$d_n$};
\draw (9.097761183429038,0.6) node[anchor=north west] {$p$};
\draw (7.1709487998500565,0.6) node[anchor=north west] {$p$};
\draw (5.119018729025687,0.6) node[anchor=north west] {$p$};
\draw (3.1421592705485515,0.6) node[anchor=north west] {$p$};
\draw (0.2,0.6) node[anchor=north west] {$p$};
\begin{scriptsize}
\draw [fill=black] (1,2) circle (1.5pt);
\draw [fill=black] (3,2) circle (1.5pt);
\draw [fill=black] (5,2) circle (1.5pt);
\draw [fill=black] (7,2) circle (1.5pt);
\draw [fill=black] (9,2) circle (1.5pt);
\end{scriptsize}
\end{tikzpicture}
    
\end{tabular}
    \caption{The behavior of caterpillar tree $\CC(n,p)$.}
    \label{fig001discussion}
\end{figure}
Through Proposition~\ref{manProtun1}, we establish the lower bound of the Sigma index in accordance with Theorems~\ref{TnpowPronu4} and \ref{TnpowPronu5}, which were also employed to analyze the upper bound discussed earlier. These two propositions introduce new degree-based lower bounds for the Sigma index of caterpillar trees, each emphasizing different structural features of the graph. Proposition \ref{manProtun1} it relates $\sigma(\CC)$ simultaneously to the first Zagreb index $M_1(\CC)$, the average degree $\lambda_{\mathscr D}$, and the parameters $n$ and $m$, yielding a mixed bound that is particularly useful when global degree information and classical indices are available but direct computation of the Sigma index is expensive. Proposition \ref{manProtun2} is providing a purely degree-sequence-based bound expressed in terms of the extremal degrees $d_1,d_n$ and the ratios $d_i/d_{i+1}$, thereby capturing the shape of the ordered degree sequence rather than only its total or maximum. The first proposition is closely connected to existing work on Zagreb indices and average degree in chemical and mathematical graph theory, whereas the second is more directly linked to extremal problems and graph realizations with a fixed degree sequence.

\begin{proposition}~\label{manProtun1}
Let $\CC$ be a caterpillar tree, $\mathscr{D} = (d_1, d_2, \dots, d_n)$ be a degree sequence where $d_1 \geqslant d_2 \geqslant \dots \geqslant d_n$ and $M_1(\CC)$ be the first Zagreb index, and let $\lambda_{\mathscr{D}}$ be the average of $\mathscr{D}$. Then, the lower bound of the Sigma index satisfies
\begin{equation}~\label{eq1manProtun1}
\sigma(\CC)\geqslant M_1(\CC)+ \floor*{\frac{2(n+m)^2}{5}}+\frac{2\lambda_{\mathscr{D}}(n^2+m^2)}{3(n+m)}.
\end{equation}
\end{proposition}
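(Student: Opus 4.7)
The plan is to build the lower bound term by term, matching each of the three summands on the right-hand side of \eqref{eq1manProtun1} to a corresponding piece of the exact formula for $\sigma(T)$ supplied by Theorem~\ref{basic01sigma}, and then using the upper-bound machinery of Section~\ref{sec3} in reverse to calibrate the remaining slack. First, I would invoke Theorem~\ref{basic01sigma} to write
\[
\sigma(T) = \sum_{i\in\{1,n\}} (d_i+1)(d_i-1)^2 + \sum_{i=2}^{n-1}(d_i+2)(d_i-1)^2 + \sum_{i=2}^{n-1}(d_i-d_{i+1})^2 + 2n-2,
\]
and expand $(d_i+2)(d_i-1)^2 = d_i^{3}-3d_i+2$. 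Since every $d_i\geqslant 1$ in a tree, the cubic contribution dominates the quadratic one, giving $\sum_i d_i^{3}\geqslant \sum_i d_i^{2} = M_1(T)$, which delivers the first summand of \eqref{eq1manProtun1} directly from the $(d_i+2)(d_i-1)^2$ block.

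For the floor term $\floor*{2(n+m)^2/5}$, I would absorb the constant $2n-2$ together with the consecutive-gap sum $\sum_{i=2}^{n-1}(d_i-d_{i+1})^2$. Using $m=n-1$ so that $n+m = 2n-1$, and applying the same averaging argument that produced the sandwich $[1/3,1/10]$ in Proposition~\ref{powPronu1}, one can show that these two contributions jointly dominate $2(2n-1)^2/5$; the ratio $2/5$ arises from the $\alpha,\beta$ weights $\floor*{3n/5}\ceil*{2n/5}$ introduced in the proof of Lemma~\ref{powPronu3}. Taking integer parts of this estimate gives exactly the floor expression demanded by \eqref{eq1manProtun1}.

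For the third summand, I would pivot to Theorem~\ref{TnpowPronu5}, which upper-bounds $\sigma(T)$ by $\sum_{i=1}^{n}d_i^{3}+\tfrac{2\lambda_{\mathscr{D}}(n^2+m^2)}{3(n+m)}$. Combining this with the tree inequality $\sum_i d_i^{3}\geqslant 2M_1(T)-m$ and with the relation $\sigma(T)=F(G)-2M_2(G)$ from Theorem~\ref{thmforggotn1} produces an algebraic identity in which the $\lambda_{\mathscr{D}}$-weighted quadratic term can be moved to the lower-bound side without collision with the $M_1(T)$ contribution already extracted. Adding the three independent lower estimates then assembles \eqref{eq1manProtun1}.

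The main obstacle will be the bookkeeping needed to guarantee that the three lower bounds are genuinely additive rather than overlapping: the cubic sum $\sum d_i^{3}$ is shared between the $M_1(T)$ extraction and the $\lambda_{\mathscr{D}}$-weighted piece inherited from Theorem~\ref{TnpowPronu5}, and one must carefully check — using the bound $\lambda_{\mathscr{D}}\leqslant 2m/n<2$ valid for trees — that the slack left over after peeling off $M_1(T)$ still accommodates both the floor term and the fractional term simultaneously, without exhausting the cubic reserve.
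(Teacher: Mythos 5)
Your first step already fails, and it fails at exactly the place where the proposition itself breaks down. Expanding $(d_i+2)(d_i-1)^2=d_i^3-3d_i+2$ is correct, but this quantity is \emph{not} bounded below by $d_i^2$ when $d_i=1$: a leaf contributes $0$ to that block while contributing $1$ to $M_1(T)$. Since a tree has at least two leaves and may consist almost entirely of them, the inequality ``the $(d_i+2)(d_i-1)^2$ block dominates $M_1(T)$'' is false, and no bookkeeping of the cubic reserve can fix it. The path $P_n$ makes this concrete: $\sigma(P_n)=2$ while $M_1(P_n)=4n-6$, so the claimed conclusion \eqref{eq1manProtun1} is itself violated for every path with $n\geqslant 3$ (the statement imposes no restriction on $\Delta$ or $\delta$ that would exclude it). Your second step collapses on the same example: for $P_n$ the gap sum $\sum_{i=2}^{n-1}(d_i-d_{i+1})^2$ vanishes and $2n-2$ is linear in $n$, so these terms cannot jointly dominate the quadratic quantity $2(2n-1)^2/5$.

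The third step is logically backwards independently of any counterexample: Theorem~\ref{TnpowPronu5} is an \emph{upper} bound on $\sigma(T)$, and an inequality $\sigma(T)\leqslant A+B$ never permits ``moving $B$ to the lower-bound side''; combining it with the identity of Theorem~\ref{thmforggotn1} yields no lower estimate for $\sigma(T)$ in terms of $B$. For comparison, the paper's own argument takes a different route from yours --- it never invokes Theorem~\ref{basic01sigma}, instead chaining assertions such as $\sigma(T)\geqslant\Delta(\Delta-1)^2+\frac{2\lambda_{\mathscr{D}}(n^2+m^2)}{3(n+m)}$ and $\sum_{i=1}^{n} d_i^2<\Delta(\Delta-1)^2$ --- but it suffers from the same fundamental defect, since those assertions also fail for paths and path-like trees. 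The honest conclusion is that neither your route nor the paper's can be repaired without adding hypotheses strong enough to exclude these counterexamples.
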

\begin{proof}
Assume $\mathscr{D}=(d_1,d_2,\dots,d_n)$ is a degree sequence where $d_1 \geqslant d_2 \geqslant \dots \geqslant d_n$ with the maximum degree $\Delta$ and $\lambda_{\mathscr{D}}$ is the average of $\mathscr{D}$. Since $2\lambda_{\mathscr{D}}(n^2 + m^2) > 3(n + m)$ and $\Delta < \frac{2\lambda_{\mathscr{D}}(n^2 + m^2)}{3(n + m)} \leqslant 3\Delta$, according to Theorem~\ref{TnpowPronu5} we find the upper bound of the Sigma index given as $\sigma(\CC) \leqslant \sum_{i=1}^{n} d_i^3 + \frac{2\lambda_{\mathscr{D}}(n^2 + m^2)}{3(n + m)}.$
Thus, let us consider the term $\Delta(\Delta - 1)^2$ satisfies $\Delta(\Delta - 1)^2 < 2(m + n)^2$. Then, we find that $\Delta(\Delta - 1)^3 \leqslant 2\lambda_{\mathscr{D}}(n^2 + m^2)$. Thus, the lower bound of the Sigma index satisfies  
\begin{equation}~\label{eq2manProtun1}
\sigma(\CC) \geqslant \Delta(\Delta - 1)^2 + \frac{2\lambda_{\mathscr{D}}(n^2 + m^2)}{3(n + m)}.
\end{equation}
Therefore, from~\eqref{eq2manProtun1} we observe that this bound approaches the Sigma index very closely, due to the values of $\Delta$ that come from the condition we imposed on $\mathscr{D}$ where $d_1 \geqslant d_2 \geqslant \dots \geqslant d_n$. In this case, we note that the term $2(n + m)^2$ satisfies the relation $2(n + m)^2 < \lambda_{\mathscr{D}}(n^2 + m^2)$. Therefore, we obtain the lower bound of the Sigma index as:
\begin{equation}~\label{eq3manProtun1}
\sigma(\CC) \geqslant \floor*{\frac{2(n + m)^2}{5}} + \frac{2\lambda_{\mathscr{D}}(n^2+m^2)}{3(n+m)}.
\end{equation}
Hence, for the last term in~\eqref{eq1manProtun1} we notice that $\sum_{i=1}^{n} d_i^2 < \Delta(\Delta-1)^2$. Thus, according to the lower bounds~\eqref{eq2manProtun1} and \eqref{eq3manProtun1}, the relationship~\eqref{eq1manProtun1} clearly holds. As desired.
\end{proof}

We observe from Proposition~\ref{manProtun1} that it directly leads to Corollary~\ref{Tercorollaryn3}. Also, we notice through Equation~\eqref{eq1Tercorollaryn3} and by directly relying with the term $\sum_{i=1}^{n}d_i^2<\Delta(\Delta-1)^2,$  that it gives us the lower bound of Sigma index.
\begin{corollary}~\label{Tercorollaryn3}
For any tree $T$ of order $n$. The lower bound of Sigma index satisfy
\begin{equation}~\label{eq1Tercorollaryn3}
\sigma(\CC)\geqslant \sum_{i=1}^{n-1}\lambda_{\mathscr{D}}(d_i-d_{i+1})^2+\floor*{\frac{2(n+m)^2}{5}}+\frac{2\lambda_{\mathscr{D}}(n^2+m^2)}{3(n+m)}.
\end{equation}
\end{corollary}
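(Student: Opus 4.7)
The plan is to derive Corollary 3.3 as a direct consequence of Proposition 4.1, by replacing the first Zagreb index $M_1(T)$ with the weighted sum of squared consecutive differences of the degree sequence, while keeping the remaining two terms of the bound verbatim.

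First I would invoke Proposition~\ref{manProtun1} to start from
\[
\sigma(T) \geqslant M_1(T) + \floor*{\frac{2(n+m)^2}{5}} + \frac{2\lambda_{\mathscr{D}}(n^2+m^2)}{3(n+m)}.
\]
Since the last two summands on the right already coincide with the corresponding terms in~\eqref{eq1Tercorollaryn3}, it is enough to establish
\[
M_1(T) \;\geqslant\; \lambda_{\mathscr{D}}\sum_{i=1}^{n-1}(d_i-d_{i+1})^2,
\]
after which the conclusion follows by transitivity applied to the Proposition~\ref{manProtun1} inequality.

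The key step is this pointwise-to-aggregate reduction. Because $\mathscr{D}$ is non-increasing with positive integer entries, we have $0 \leqslant d_i - d_{i+1} \leqslant d_i$ and hence $(d_i-d_{i+1})^2 \leqslant d_i^2$ for every $i$; summing over $i = 1,\dots,n-1$ gives $\sum_{i=1}^{n-1}(d_i - d_{i+1})^2 \leqslant \sum_{i=1}^{n-1} d_i^2 \leqslant M_1(T)$. Combining this with the auxiliary estimate $\sum_{i=1}^{n} d_i^2 < \Delta(\Delta-1)^2$ flagged by the authors in the paragraph preceding the corollary, and with the elementary bound $\lambda_{\mathscr{D}} = 2(n-1)/n < 2$ available for any tree, one absorbs the constant factor $\lambda_{\mathscr{D}}$ into the comparison to obtain $\lambda_{\mathscr{D}} \sum_{i=1}^{n-1}(d_i-d_{i+1})^2 \leqslant M_1(T)$. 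Plugging this into the Proposition~\ref{manProtun1} lower bound yields exactly~\eqref{eq1Tercorollaryn3}.

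The main obstacle is rigorously justifying the absorption of the $\lambda_{\mathscr{D}}$ factor when large jumps occur in the degree sequence, since there $(d_i-d_{i+1})^2$ can be comparable to $d_i^2$ and a termwise comparison would fail. The way around this is to use the global estimate $\sum d_i^2 < \Delta(\Delta-1)^2$ to bound the aggregate contribution of the squared jumps, rather than arguing index by index. Once this quantitative comparison is in place, no further estimation beyond Proposition~\ref{manProtun1} is needed and the statement of Corollary~\ref{Tercorollaryn3} follows immediately.
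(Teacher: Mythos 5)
Your overall strategy is the same as the paper's: the authors also justify Corollary~\ref{Tercorollaryn3} in a single sentence as a direct consequence of Proposition~\ref{manProtun1} together with the auxiliary claim $\sum_{i=1}^{n}d_i^2<\Delta(\Delta-1)^2$. The difficulty is that the one step you correctly isolate as the crux, namely
\[
M_1(T)\;\geqslant\;\lambda_{\mathscr{D}}\sum_{i=1}^{n-1}(d_i-d_{i+1})^2,
\]
is false in general, and your proposed repair does not close the gap. For the star $K_{1,n-1}$ the degree sequence is $(n-1,1,\dots,1)$, so $M_1(T)=n(n-1)$, $\lambda_{\mathscr{D}}=2(n-1)/n$, and the jump sum equals $(n-2)^2$; the inequality reduces to $n^2\geqslant 2(n-2)^2$, which fails for every $n\geqslant 7$ (for $n=7$ one gets $M_1=42$ against $\lambda_{\mathscr{D}}(d_1-d_2)^2=300/7\approx 42.86$). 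Your own diagnosis of the obstacle is accurate: the termwise bound $(d_i-d_{i+1})^2\leqslant d_i^2$ only yields $\lambda_{\mathscr{D}}\sum_{i=1}^{n-1}(d_i-d_{i+1})^2< 2M_1(T)$, and the factor $2$ genuinely cannot be absorbed when a single large jump carries essentially all of the degree mass.

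The patch you invoke to absorb $\lambda_{\mathscr{D}}$, namely $\sum_{i=1}^{n}d_i^2<\Delta(\Delta-1)^2$, cannot work for two reasons. First, it is an \emph{upper} bound on $M_1(T)$, whereas the reduction requires a \emph{lower} bound on $M_1(T)$ relative to the weighted jump sum; an upper bound on the quantity you are trying to bound from below points in the wrong logical direction. Second, the estimate is itself false for many trees: for the path $P_n$ one has $\Delta=2$, hence $\Delta(\Delta-1)^2=2$, while $\sum_{i=1}^{n}d_i^2=4n-6$. So the comparison between $M_1(T)$ and $\lambda_{\mathscr{D}}\sum_{i=1}^{n-1}(d_i-d_{i+1})^2$ is not established, and the deduction of \eqref{eq1Tercorollaryn3} from Proposition~\ref{manProtun1} does not go through as written. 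To be fair, the paper offers nothing beyond the same unproved comparison, so this gap is inherited from the source rather than introduced by you; but as a standalone argument your proposal does not prove the statement, and the counterexample above indicates that no argument along this exact line can.
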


Based on Conjecture~\ref{conjecturen1} which establishes Proposition~\ref{manProtun2}, this conjecture is employed to refine the optimal terms that play a crucial role in enhancing the optimal behavior of the Sigma index.

\begin{conjecture}~\label{conjecturen1}
For any caterpillar tree $\CC$, $k$ be an integer where $0<k<5$. Then, 
\begin{equation}~\label{eq1conjecturen1}
0\leqslant \dfrac{2^k(n+m)^2}{2(k+1)\Delta(\Delta-1)^2} \leqslant 2k.
\end{equation}
\end{conjecture}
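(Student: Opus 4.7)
The plan is to rewrite the inequality using the tree identity $m=n-1$ so that it becomes
\[
0\;\leqslant\;\frac{2^k(2n-1)^2}{2(k+1)\,\Delta(\Delta-1)^2}\;\leqslant\;2k,
\]
and then attack the two bounds separately. The left inequality is immediate under the standing assumption $\delta\geqslant 2$ that pervades the earlier lemmas (in particular Lemma~\ref{powPronu3}), since this forces $\Delta\geqslant 2$ and hence the denominator is strictly positive, while the numerator is obviously non-negative. All of the substantive work lies in the upper bound, which I would recast as the purely structural claim
\[
\Delta(\Delta-1)^2 \;\geqslant\; \frac{2^{k-2}\,(2n-1)^2}{k(k+1)}.
\]
Since $k$ ranges only over $\{1,2,3,4\}$, the right-hand constant $2^{k-2}/[k(k+1)]$ is at most $4/15$, so it suffices to verify a handful of cubic-vs-quadratic inequalities relating $\Delta$ and $n$.

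To establish these I would exploit the regime in which the preceding results operate, in particular the hypothesis $4\Delta\leqslant n$ used in Lemma~\ref{powPronu2} together with the elementary estimate $\Delta(\Delta-1)^2\geqslant\tfrac12\Delta^3$ valid for $\Delta\geqslant 3$. The goal is to extract a lower bound of the form $\Delta\geqslant c_k\,n^{2/3}$ which, once cubed, dominates $(2n-1)^2$ with the correct constant for each admissible $k$. For the four values of $k$ this reduces to checking a single cubic polynomial inequality, which can be handled by monotonicity once a small-$n$ base case is verified by direct substitution.

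The main obstacle will be controlling the two boundary regimes of tree shape. On the path-like end, $\Delta=2$ forces $\Delta(\Delta-1)^2=2$, and the bound collapses to $(2n-1)^2\leqslant 8k(k+1)/2^k$, which already fails for $P_3$ at $k=1$; one must therefore import an auxiliary hypothesis on $\Delta$ inherited from the earlier lemmas, or restrict the ambient class of trees to those whose maximum degree is sufficiently large. On the star-like end, $\Delta$ grows linearly with $n$, so $\Delta(\Delta-1)^2$ grows like $n^3$ and comfortably dominates $(2n-1)^2\sim 4n^2$, making the upper bound satisfied with substantial slack and never witnessing the constant $2k$ as sharp. Reconciling these two behaviours — in effect, pinpointing the precise subclass of trees on which the conjecture is intended to hold — and then carrying out the cubic bookkeeping uniformly across the four values of $k$ is, I expect, the hardest step in any full proof.
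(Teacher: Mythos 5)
Your diagnosis is essentially correct, and it exposes the real problem: the upper bound in \eqref{eq1conjecturen1} is false for trees of small maximum degree, so no argument can close it without an additional hypothesis on $\Delta$. Your reduction of the right-hand inequality to $\Delta(\Delta-1)^2\geqslant 2^{k-2}(2n-1)^2/[k(k+1)]$ is exact, and your counterexample is valid: for $P_3$ with $k=1$ one gets $2^k(n+m)^2/[2(k+1)\Delta(\Delta-1)^2]=50/8=6.25>2=2k$, and more generally every path $P_n$ violates the bound for each $k\in\{1,2,3,4\}$ once $n$ is moderately large, since the middle expression grows like $n^2$ while the right side is a constant. The paper's own proof does not repair this: after observing that the expression is positive, it simply asserts that $(n+m)^2/\Delta(\Delta-1)^2<2k$ and declares the right-hand inequality ``satisfied,'' with no justification. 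That unsupported assertion is precisely the step you show fails on path-like trees, so your analysis is the more rigorous of the two; the statement needs either a hypothesis forcing $\Delta(\Delta-1)^2$ to dominate a constant multiple of $(n+m)^2$ (roughly $\Delta\gtrsim n^{2/3}$), or a reformulation.

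Two smaller corrections to your plan. The hypothesis $4\Delta\leqslant n$ that you propose to import from Lemma~\ref{powPronu2} is an \emph{upper} bound on $\Delta$, so it cannot deliver the lower bound $\Delta\geqslant c_k n^{2/3}$ that your cubic bookkeeping requires; whatever restriction saves the conjecture must be imposed afresh and is not present anywhere in the paper. Also, the sharp value of $\max_k 2^{k-2}/[k(k+1)]$ over $k\in\{1,2,3,4\}$ is $1/4$, attained at $k=1$ (your bound $4/15$ is safe but not tight), so the binding case of Conjecture~\ref{conjecturen1} is $k=1$ and the requisite structural condition is $4\Delta(\Delta-1)^2\geqslant (n+m)^2$. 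Neither point changes your conclusion.
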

\begin{proof}
 Let $k$ be an integer where $0<k<5$, and assume $m=n-1$ in a tree $T$. Then the term $2^k$ with $(n+m)^2$ holds the bound $2^k<(n+m)^2$. Thus, clearly we find that: 
 \begin{equation}~\label{eq2conjecturen1}
 \dfrac{2^k(n+m)^2}{2(k+1)}>0
 \end{equation}
 Therefore, through relationship~\eqref{eq2conjecturen1}, the left-hand side of inequality~\eqref{eq1conjecturen1} is satisfied when considering the term $\Delta(\Delta-1)^2$, which make~\eqref{eq1conjecturen1}  approach zero closely. Thus, for the right-hand side of the inequality~\eqref{eq1conjecturen1}, we find that $(n+m)^2/\Delta(\Delta-1)^2 <2k$. Thus, for the term $2^k(n+m)^2$ is growing up. The right-hand side of inequality~\eqref{eq1conjecturen1} is satisfied.
\end{proof}

Actually,  Proposition \ref{manProtun1} is well suited to situations where robust bounds from aggregate degree data are required, while Proposition \ref{manProtun2} supports fine-grained optimization and comparison of caterpillar trees under prescribed degree-sequence constraints.
\begin{proposition}~\label{manProtun2}
Let $\CC$ be a caterpillar tree and $\mathscr{D}=(d_1,d_2,\dots,d_n)$ be a degree sequence where $d_1 \geqslant d_2 \geqslant \dots \geqslant d_n$. Then, the lower bound of the Sigma index satisfies
\begin{equation}~\label{eq1manProtun2}
\sigma(\CC)\geqslant d_n^3+d_1^3+\sum_{i=1}^{n-1}\frac{d_i}{d_{i+1}}+\floor*{\frac{n^2}{2}}+2n+2.
\end{equation}
\end{proposition}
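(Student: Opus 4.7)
The plan is to begin from the identity of Theorem~\ref{basic01sigma}, which expresses $\sigma(T)$ as the sum
\[
\sigma(T) = (d_1{+}1)(d_1{-}1)^2 + (d_n{+}1)(d_n{-}1)^2 + \sum_{i=2}^{n-1}(d_i{+}2)(d_i{-}1)^2 + \sum_{i=2}^{n-1}(d_i-d_{i+1})^2 + (2n-2).
\]
Expanding the boundary contributions gives $(d_1+1)(d_1-1)^2 = d_1^3 - d_1^2 - d_1 + 1$ and analogously for $d_n$, so the first two terms already produce $d_1^3 + d_n^3$ modulo a quadratic correction. The strategy is to absorb this correction, together with the ratio sum and the floor term, using the remaining nonnegative interior pieces $\sum_{i=2}^{n-1}(d_i+2)(d_i-1)^2$ and $\sum_{i=2}^{n-1}(d_i-d_{i+1})^2$ along with the ambient constant $2n-2$.

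First I would handle the ratio contribution $\sum_{i=1}^{n-1} d_i/d_{i+1}$. Because $\mathscr{D}$ is non-increasing, each quotient satisfies $d_i/d_{i+1}\geqslant 1$, so after isolating the telescoping portion the sum is controlled by $d_1/d_n + (n-2)$. I would then compare term-by-term with $(d_i+2)(d_i-1)^2$: for $d_i \geqslant d_{i+1}\geqslant 2$ one has $(d_i+2)(d_i-1)^2 \geqslant d_id_{i+1}\cdot (d_i/d_{i+1})$, and for the leaf case $d_n=1$ the boundary contribution $(d_n+1)(d_n-1)^2=0$ matches $d_n^3=1$ up to an absorbable constant. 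This lets me trade the interior cubic terms for $\sum d_i/d_{i+1}$ while preserving positive slack.

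Next I would import the floor term via Proposition~\ref{manProtun1}, which supplies
\[
\sigma(T)\geqslant M_1(T)+\floor*{\tfrac{2(n+m)^2}{5}}+\tfrac{2\lambda_{\mathscr{D}}(n^2+m^2)}{3(n+m)}.
\]
Since $m=n-1$ for a tree, we have $\floor*{2(n+m)^2/5}=\floor*{2(2n-1)^2/5}\geqslant \floor*{n^2/2}$ for all $n\geqslant 2$, so the floor contribution in \eqref{eq1manProtun2} is already subsumed. Combining this with the residual slack from the previous step and the constant $2n-2$ appearing in Theorem~\ref{basic01sigma}, augmented by the strictly positive interior difference sum $\sum_{i=2}^{n-1}(d_i-d_{i+1})^2\geqslant 4$ whenever $\mathscr{D}$ is non-constant, yields the $2n+2$ required.

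The hard part will be the careful bookkeeping between the ratio quantity $\sum d_i/d_{i+1}$ and the interior cubic terms $(d_i+2)(d_i-1)^2$: when consecutive degrees are nearly equal, $(d_i+2)(d_i-1)^2$ is small relative to $d_i/d_{i+1}$ only when the $d_i$ themselves are small, and in this regime I must lean on the boundary $d_1^3+d_n^3$ and the Proposition~\ref{manProtun1} reserve to compensate. I would finish by assembling all the inequalities and verifying that the combined bound matches \eqref{eq1manProtun2} exactly, isolating the edge cases $d_n=1$ and $d_1=d_n$ to confirm the boundary contributions do not collapse.
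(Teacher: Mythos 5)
Your route is genuinely different from the paper's own proof (which argues via Conjecture~\ref{conjecturen1} and manipulations of $\Delta(\Delta-1)^2$ rather than the decomposition of Theorem~\ref{basic01sigma}), but it contains two concrete gaps that cannot be closed. First, the way you import the floor term from Proposition~\ref{manProtun1} is an invalid aggregation of lower bounds: from $\sigma(T)\geqslant A$ and $\sigma(T)\geqslant B$ you may not conclude $\sigma(T)\geqslant A+B'$ for a piece $B'$ of $B$. You would need a single inequality that produces the cubes, the ratio sum, $\lfloor n^2/2\rfloor$ and $2n+2$ simultaneously, and nothing in your outline delivers the floor term with positive slack left over. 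Second, the term-by-term trade of $(d_i+2)(d_i-1)^2$ against $d_i/d_{i+1}$ collapses exactly where you flag the ``hard part'': in a tree every pendant vertex has $d_i=1$, so $(d_i+2)(d_i-1)^2=0$ contributes no slack there, and in leaf-heavy trees there is nothing left to absorb the ratio sum, the quadratic corrections $-d_1^2-d_1-1$ coming from expanding $(d_1+1)(d_1-1)^2$, or the $\lfloor n^2/2\rfloor$ term.

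No amount of bookkeeping can rescue the argument, because the inequality itself fails on the simplest trees. For the star $K_{1,n-1}$ one has $\sigma=(n-1)(n-2)^2$, while the right-hand side of~\eqref{eq1manProtun2} is at least $(n-1)^3+\lfloor n^2/2\rfloor$, which already exceeds $(n-1)(n-2)^2$; for the path $P_n$ one has $\sigma=2$ against a right-hand side of order $n^2/2$. (The paper's own proof does not survive these examples either: its final chain of inequalities silently requires $\Delta(\Delta-1)^2-2n-2\geqslant\lfloor n^2/2\rfloor$, which is unjustified and false for bounded $\Delta$.) So the correct response here is not to refine the estimates but to recognize that the statement needs an additional hypothesis or a weaker right-hand side before any proof can go through.
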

\begin{proof}
Assume $\mathscr{D}=(d_1,d_2,\dots,d_n)$ is a degree sequence where $d_1\geqslant d_2 \geqslant \dots \geqslant d_n$, and according to Conjecture~\ref{conjecturen1}, we noticed that $0<(n+m)^2/\Delta(\Delta-1)^2\leqslant 2$. Thus, we find that $d_n^3+d_1^3<\Delta(\Delta-1)^2$ and $\sigma(\CC)\leqslant d_n^3+d_1^3+\Delta(\Delta-1)^2$. Then,
\begin{equation}~\label{eq2manProtun2}
\sigma(\CC)\geqslant d_n^3+d_1^3+\Delta(\Delta-1)^2-\floor*{\frac{n^2}{2}}.
\end{equation}
Thus, for $d_i$ and $d_{i+1}$ we noticed that $1\leqslant d_i/d_{i+1}\leqslant 2$. Then, 
\[
\frac{(n+m)^2}{\Delta(\Delta-1)^2}\leqslant \sum_{i=1}^{n-1}\frac{d_i}{d_{i+1}} \leqslant 2n+2,
\]
considering the fact that $0<(n+m)^2/\Delta(\Delta-1)^2\leqslant 2$. Thus, according to~\eqref{eq2manProtun2} we find that
\begin{equation}~\label{eq3manProtun2}
\sigma(\CC)\geqslant d_n^3+d_1^3+\Delta(\Delta-1)^2+\sum_{i=1}^{n-1}\frac{d_i}{d_{i+1}}-\floor*{\frac{n^2}{2}}.
\end{equation}
Therefore, the relationship~\eqref{eq3manProtun2} establishes the lower bound by comparing the terms as
\begin{align*}
 \sigma(\CC) & \geqslant d_n^3+d_1^3+\Delta(\Delta-1)^2+\sum_{i=1}^{n-1}\frac{d_i}{d_{i+1}}-\floor*{\frac{n^2}{2}} \\
 & \geqslant d_n^3+d_1^3+\Delta(\Delta-1)^2+\sum_{i=1}^{n-1}\frac{d_i}{d_{i+1}}-2n-2 \\
 & \geqslant d_n^3+d_1^3+\sum_{i=1}^{n-1}\frac{d_i}{d_{i+1}}+\floor*{\frac{n^2}{2}}.
\end{align*}
Thus, as we know, the value $2n+2$ is close to $2n$. Hence, we find that~\eqref{eq1manProtun2} holds.
\end{proof}

Furthermore, we observed that according to $\mathscr{D}$ where we optimize the terms $(d_i-d_{i+1})^3$ and $(d_i-d_{i+1})^2$. In fact, these terms significantly improve the determination of the lower bounds of the Sigma index, especially when studying the differences according to the given conditions in $\mathscr{D}$.

\begin{lemma}~\label{lemlowSigman1}
 Let $\CC$ be a caterpillar tree with $\mathscr{D}=(d_1,d_2,\dots,d_n)$ be a degree sequence where $d_1 \geqslant d_2 \geqslant \dots \geqslant d_n$. The lower bound of Sigma index satisfies
 \begin{equation}~\label{eq1lemlowSigman1}
\sigma(\CC)\geqslant  \sum_{i=1}^{n-1}(d_i-d_{i+1})^3+ \sum_{i=2}^{n-2}(d_i-d_{i+1})^2+d_1^2+d_n^2+\floor*{\frac{2(n+m)^2}{5}}.
 \end{equation}
\end{lemma}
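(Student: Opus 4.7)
The plan is to match each of the five summands on the right-hand side of~\eqref{eq1lemlowSigman1} against terms in the explicit Sigma-index identity of Theorem~\ref{basic01sigma}, and then to import the floor contribution $\floor*{2(n+m)^2/5}$ from Proposition~\ref{manProtun1}. Since the degree sequence is non-increasing, every building block in Theorem~\ref{basic01sigma} is nonnegative, so truncating the index range in $\sum_{i=2}^{n-1}(d_i-d_{i+1})^2$ down to $\sum_{i=2}^{n-2}(d_i-d_{i+1})^2$ only discards a nonnegative term; this immediately secures the second summand of the target bound.

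Next I would handle the cubic difference sum $\sum_{i=1}^{n-1}(d_i-d_{i+1})^3$. The key inequality is $(d_i+2)(d_i-1)^2 \geqslant (d_i-d_{i+1})^3$, which follows from $d_i-d_{i+1} \leqslant d_i$ together with a direct expansion once $d_i \geqslant 2$. Summing this over $i=2,\ldots,n-1$ absorbs the interior cubic contributions into the interior sum of Theorem~\ref{basic01sigma}, while the two boundary cubes $(d_1-d_2)^3$ and $(d_{n-1}-d_n)^3$ are dominated by the boundary expansions $(d_1+1)(d_1-1)^2$ and $(d_n+1)(d_n-1)^2$ that appear in the same identity. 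A short algebraic estimate shows that what remains of these boundary expansions after peeling off the two cubes still exceeds $d_1^2 + d_n^2$ whenever $d_1,d_n \geqslant 2$, which accounts for the third and fourth summands of~\eqref{eq1lemlowSigman1}. Any shortfall in the corner case $d_n=1$ (a pendant vertex) is absorbed by the explicit constant $2n-2$ already sitting in the identity.

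Finally I would invoke Proposition~\ref{manProtun1}, which supplies $\floor*{2(n+m)^2/5}$ together with $M_1(T)$ and the positive correction $2\lambda_{\mathscr{D}}(n^2+m^2)/3(n+m)$. Chaining this bound with the piecewise inequality obtained from Theorem~\ref{basic01sigma} — and allocating the slack left over from the Zagreb term and from the average term to the floor piece — completes the assembly of~\eqref{eq1lemlowSigman1}. Conjecture~\ref{conjecturen1} can be used to certify the size range of $(n+m)^2 / \Delta(\Delta-1)^2$ during this allocation, matching the bookkeeping style already employed in Proposition~\ref{manProtun2}.

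The main obstacle I foresee is verifying the boundary-cube absorption $(d_1+1)(d_1-1)^2 \geqslant (d_1-d_2)^3 + d_1^2$ uniformly, and symmetrically at the low end of the sequence; this is where the monotonicity of $\mathscr{D}$ and the constant $2n-2$ in Theorem~\ref{basic01sigma} must be used in tandem, and a small case split on whether $d_n=1$ or $d_n\geqslant 2$ will likely be needed. Once that absorption is validated, every remaining step reduces to adding nonnegative quantities drawn from the identity of Theorem~\ref{basic01sigma} together with Proposition~\ref{manProtun1}.
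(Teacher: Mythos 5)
Your route is genuinely different from the paper's (the paper argues by induction on $n$ after checking $n=4,5,6$), but it contains a fatal step: you try to obtain the additive term $\floor*{\frac{2(n+m)^2}{5}}$ by ``chaining'' the bound of Proposition~\ref{manProtun1} with the term-matching you perform inside the identity of Theorem~\ref{basic01sigma}. Two lower bounds for the same quantity cannot be added: from $\sigma(T)\geqslant A$ and $\sigma(T)\geqslant B$ one only gets $\sigma(T)\geqslant\max(A,B)$, never $\sigma(T)\geqslant A+B$. To legitimately append the floor term to the difference sums and $d_1^2+d_n^2$, you would have to show that the part of the identity of Theorem~\ref{basic01sigma} \emph{left over} after absorbing those summands is still at least $\floor*{\frac{2(n+m)^2}{5}}$, and this is false: for the path $P_n$ one has $\sigma(P_n)=2$ while $m=n-1$ gives $\floor*{\frac{2(2n-1)^2}{5}}\sim\frac{8}{5}n^2$, so the single floor summand already exceeds the entire Sigma index. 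No allocation of ``slack'' from the Zagreb or average terms can close a gap of order $n^2$ against a constant.

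The smaller ingredients of your plan are mostly sound --- $(d_i+2)(d_i-1)^2\geqslant(d_i-d_{i+1})^3$ does follow from $d_i-d_{i+1}\leqslant d_i-1$, and truncating $\sum_{i=2}^{n-1}(d_i-d_{i+1})^2$ to $\sum_{i=2}^{n-2}$ only discards a nonnegative term --- but they cannot rescue the argument, because the inequality being proved fails for paths (and more generally for any tree whose Sigma index grows slower than $n^2$). This is worth flagging rather than papering over: the obstruction you foresaw (the boundary-cube absorption) is not the real one; the real one is that the constant term on the right-hand side is too large for the statement to hold at all.
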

\begin{proof}
Assume $\mathscr{D}=(d_1,d_2,\dots,d_n)$ is a degree sequence where $d_1 \geqslant d_2 \geqslant \dots \geqslant d_n$. To determine the lower bound of the Sigma index given in equation~\eqref{eq1lemlowSigman1}, we will use mathematical induction. Consequently, we prove the validity of equation~\eqref{eq1lemlowSigman1} for specific values of $n$, namely $n=4,5,6$. 

For $n=4,$ we have
 \begin{equation}~\label{eq2lemlowSigman1}
\sigma(\CC) \geqslant \sum_{i=1}^{n-1}(d_i - d_{i+1})^3 + \sum_{i=2}^{n-2}(d_i - d_{i+1})^2 + d_1^2 + d_n^2 + \floor*{\frac{2(n+m)^2}{5}} - n.
 \end{equation}

For $n=5$ we observe that
  \begin{equation}~\label{eq3lemlowSigman1}
\sigma(\CC) \geqslant \sum_{i=1}^{n-1}(d_i - d_{i+1})^3 + \sum_{i=2}^{n-2}(d_i - d_{i+1})^2 + d_1^2 + d_n^2 + \floor*{\frac{2(n+m)^2}{5}} - 2n.
 \end{equation}

Similarly, for $n=6$ we find
  \begin{equation}~\label{eq4lemlowSigman1}
\sigma(\CC) \geqslant \sum_{i=1}^{n-1}(d_i - d_{i+1})^3 + \sum_{i=2}^{n-2}(d_i - d_{i+1})^2 + d_1^2 + d_n^2 + \floor*{\frac{2(n+m)^2}{5}} - 3n.
 \end{equation}

From~\eqref{eq2lemlowSigman1}, \eqref{eq3lemlowSigman1}, and \eqref{eq4lemlowSigman1}, considering the term with respect to $n$ (since $m = n-1$), the relationship~\eqref{eq1lemlowSigman1} holds true for $n=4,5,6$. 

Assume the relationship is true for $n$; we must prove it for $n+1$. Therefore,
 \begin{align*}
\sigma(\CC) & \geqslant \sum_{i=1}^{n-1}(d_i - d_{i+1})^3 + \sum_{i=2}^{n-2}(d_i - d_{i+1})^2 + d_1^2 + d_n^2 + \floor*{\frac{2(n+m)^2}{5}} \\
& \geqslant \sum_{i=2}^{n-2}(d_i - d_{i+1})^3 + \sum_{i=1}^{n-3}(d_i - d_{i+1})^2 + d_1^2 + d_{n-1}^2 + d_n^2 + \floor*{\frac{2(n+m-2)^2}{5}} \\
& \geqslant \sum_{i=2}^{n}(d_i - d_{i+1})^3 + \sum_{i=3}^{n-1}(d_i - d_{i+1})^2 + d_1^2 + d_n^2 + \floor*{\frac{2(n+m-2)^2}{5}} \\
& \geqslant \sum_{i=3}^{n+1}(d_i - d_{i+1})^3 + \sum_{i=4}^{n}(d_i - d_{i+1})^2 + d_1^2 + d_n^2 + \floor*{\frac{2(n+m-3)^2}{5}}.
 \end{align*}

Note that the constant term is $\floor*{\frac{2(n+m)^2}{5}}$ and since $(d_i - d_{i+1})^3$ is positive due to the non-increasing order $d_1 \geqslant d_2 \geqslant \dots \geqslant d_n$, we have
  \begin{equation}~\label{eq5lemlowSigman1}
\sigma(\CC) \geqslant n(d_i-d_{i+1})_{i \geqslant 1}^3+n(d_i-d_{i+1})_{i \geqslant 2}^2+d_1^2+d_n^2+\floor*{\frac{2(n+m-i)_{i \leqslant m}^2}{5}}.
 \end{equation}

Finally, from~\eqref{eq4lemlowSigman1} and~\eqref{eq5lemlowSigman1}, we conclude that the lower bound of the Sigma index holds true for all $n+1$ and $n+2$. Thus, equation~\eqref{eq1lemlowSigman1} is established.
\end{proof}

Similarly, according to Lemma~\ref{lemlowSigman1} we obtained the following result such that we optimized the constant term $2(n+m-i)_{i\geqslant 2}^2$ with $n(d_i-d_{i+1})_{i \geqslant 1}^3$ and $n(d_i-d_{i+1})_{i \geqslant 2}^2$.
\begin{corollary}~\label{Tercorollaryn4}
For any tree $T$ with $\mathscr{D}=(d_1,d_2,\dots,d_n)$ be a degree sequence where $d_1\geqslant d_2 \geqslant \dots \geqslant d_n$.  The lower bound of Sigma index satisfy
 \begin{equation}~\label{eq1Tercorollaryn4}
\sigma(\CC)\geqslant  \sum_{i=1}^{n-1}(d_i-d_{i+1})^4+ \sum_{i=2}^{n-2}(d_i-d_{i+1})^3+d_1^2+d_n^2+\floor*{\frac{2(n+m-2)^2}{5}}.
 \end{equation}
\end{corollary}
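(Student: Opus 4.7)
The plan is to obtain Corollary~\ref{Tercorollaryn4} as a refinement of Lemma~\ref{lemlowSigman1}, rather than by an independent argument, by reusing the intermediate estimate in that proof and then trading the scaling factor for an extra power in each difference term. Concretely, I would start from the penultimate inequality appearing in the proof of Lemma~\ref{lemlowSigman1}, namely
\[
\sigma(T) \geqslant n(d_i-d_{i+1})_{i \geqslant 1}^3 + n(d_i-d_{i+1})_{i \geqslant 2}^2 + d_1^2 + d_n^2 + \floor*{\tfrac{2(n+m-i)_{i \leqslant m}^2}{5}},
\]
in which each difference contribution has been amplified by a factor of $n$. The key pointwise observation is that, since $\mathscr{D}$ is a non-increasing sequence of non-negative integers bounded by $\Delta \leqslant n-1$, each difference $x := d_i - d_{i+1}$ lies in $\{0,1,\dots,n-1\}$, so $n \cdot x^{k} \geqslant x^{k+1}$ for every such $x$ and every $k \geqslant 0$. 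Applying this with $k=2$ and $k=3$ inside the respective sums lets me upgrade each exponent by one and match the powers appearing on the right-hand side of~\eqref{eq1Tercorollaryn4}.

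Next, I would fix the index inside the floor term at $i=2$, which is the smallest admissible index from the second difference sum $\sum_{i\geqslant 2}(d_i-d_{i+1})^2$ in Lemma~\ref{lemlowSigman1}, producing precisely the constant $\floor*{\frac{2(n+m-2)^2}{5}}$ required by the corollary. The vertex contributions $d_1^2 + d_n^2$ and the index ranges of the two difference sums ($i=1,\dots,n-1$ and $i=2,\dots,n-2$) are preserved without any further manipulation, since they already appear in the Lemma in this form. Assembling the three ingredients then delivers
\[
\sigma(T) \geqslant \sum_{i=1}^{n-1}(d_i-d_{i+1})^4 + \sum_{i=2}^{n-2}(d_i-d_{i+1})^3 + d_1^2 + d_n^2 + \floor*{\tfrac{2(n+m-2)^2}{5}},
\]
which is the desired bound~\eqref{eq1Tercorollaryn4}.

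The step I expect to be the main obstacle is the pointwise exponent upgrade in the regime where some difference $d_i - d_{i+1}$ approaches $\Delta$: the inequality $n x^{k} \geqslant x^{k+1}$ becomes tight exactly at $x = n$, which cannot occur in a simple graph but sits on the boundary of the admissible range. One must therefore verify carefully that the $n$-factor produced by the inductive step of Lemma~\ref{lemlowSigman1} is actually available termwise (as opposed to being a global amplification that double-counts some vertices). If that scaling is not quite term-by-term, the clean remedy is to split the two difference sums into a small-difference part (where $d_i - d_{i+1} \leqslant 1$, on which $x^{k+1} = x^{k}$ anyway) and a large-difference part (where $x \geqslant 2$, and the $n$-factor comfortably dominates), and to absorb any residual deficit into the slack between $\floor*{\frac{2(n+m)^2}{5}}$ and $\floor*{\frac{2(n+m-2)^2}{5}}$, which grows linearly in $n$.
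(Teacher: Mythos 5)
Your proposal is correct relative to Lemma~\ref{lemlowSigman1} and takes essentially the same route as the paper: the paper offers no formal proof of Corollary~\ref{Tercorollaryn4}, only the preceding remark that it is obtained from the lemma by ``optimizing'' the constant term $2(n+m-i)^2$ at $i=2$ together with the amplified terms $n(d_i-d_{i+1})^3$ and $n(d_i-d_{i+1})^2$, which is precisely your exponent-upgrade via $n\,x^{k}\geqslant x^{k+1}$ for $x=d_i-d_{i+1}\leqslant n-1$. Your added care about whether the factor $n$ is available termwise is a reasonable refinement the paper does not supply, but it does not change the underlying argument.
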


Throughout Corollary~\ref{Tercorollaryn4}, we analyze a specific case derived from Lemma~\ref{lemlowSigman1}. Taken together, these results yield an improved lower bound for the Sigma index, which is sufficiently close to the Sigma index itself and provides a sharp, explicit value with respect to $\mathscr{D}$. Accordingly, we take the constant term to be $\floor*{\frac{2n-1}{2}}\ceil*{\frac{3n-2}{3}}$, determined by the values of $n$ and $\Delta$.

\section{Discussion The Effects of Bounds on Behavior of Sigma Index}~\label{sec5}
The results presented in Sections~\ref{sec3} and \ref{sec4} show that the optimal behavior of the Sigma index is analyzed by determining its upper and lower bounds together with those of several selected topological indices.
 Thus, the upper bound of the Sigma index as
\begin{equation}~\label{eqqupeerbomn1}
\sigma(\CC)\leqslant \floor*{\frac{3n}{5}} \ceil*{\frac{2n}{5}} \frac{2nm}{\Delta(\Delta-1)^2}+\delta \sqrt{nm/2\Delta(\Delta-1)}+\frac{1}{2}(n^2+m^2)+n(m-\Delta).
\end{equation}
Also, the upper bound of the Sigma index is
\[
\sigma(\CC)\leqslant \sum_{i=1}^{n-1}\lambda_{\mathscr{D}}(d_i-d_{i+1})^3 +2(n^2+m^2)+3m+n+2.
\]
Hence, we observe that the difference making the upper bound sufficiently close is $n^2+m^2$ and $n(m-\Delta)$. Thus, we presented among Theorem~\ref{TnpowPronu5}, the relationship of the sharp upper bound of Sigma index with it collories as
\[
\sigma(\CC)\leqslant \sum_{i=1}^{n}d_i^3+\frac{2\lambda_{\mathscr{D}}(n^2+m^2)}{3(n+m)}.
\]
Thus, let $k$ be an integer where $2\leqslant k <m-1$. Then, the upper bound of Sigma index associated with the terms $n^2+m^2$ and $n(m-\Delta)$. Therefore, the closest upper bound of Sigma index satisfy
\begin{equation}~\label{eq1Discussion}
\sigma(\CC) \leqslant k(n^2+m^2)+n(km-\Delta).
\end{equation}
From another perspective, based on PropositionS~\ref{manProtun1}, \ref{manProtun2} and Lemma~\ref{lemlowSigman1}, the fixed constant in each that ensures the lower bound closely approximates the Sigma index related to the term $\floor*{(2(n+m)^2)/5}$. Consequently, by considering relation~\eqref{eq1Discussion} and employing a similar reasoning, we derive the value that most accurately approaches the Sigma index through the tightest possible lower bound as:
\begin{equation}~\label{eq2Discussion}
\sigma(\CC) \geqslant \frac{1}{k}(n^2+m^2)+\frac{1}{m-k}n(m-\Delta)+\floor*{\frac{2(n+m)^2}{5k}}.
\end{equation}
Furthermore, from~\eqref{eq1Discussion} and \eqref{eq2Discussion} we noticed that the difference between both bounds
\[
k(n^2+m^2)+n(km-\Delta)-\left( \frac{1}{k}(n^2+m^2)+\frac{1}{m-k}n(m-\Delta)+\floor*{\frac{2(n+m)^2}{5k}}\right)
\]
yield to 
\[
\frac{k^2-1}{k}(n^2+m^2)+n\left(km-\Delta-\frac{m-\Delta}{m-k}\right)-\floor*{\frac{2(n+m)^2}{5k}}
\]
Therefore, the difference provides the upper bound of Sigma index which we noticed that: 
\[
\sigma(\CC) < \frac{k^2-1}{k}(n^2+m^2)+n\left(km-\Delta-\frac{m-\Delta}{m-k}\right)-\floor*{\frac{2(n+m)^2}{5k}}.
\]
To enhance the previous discussion, we compare the behaviors of these extreme bounds. The lower bound 1 (LB1) is given by Lemma~\ref{lemlowSigman1}, and the lower bound 2 (LB2) by~\eqref{eq2Discussion}. Similarly, the upper bound 1 (UB1) is from~\eqref{eqqupeerbomn1}, and the upper bound 2 (UB2) from Theorem~\ref{TnpowPronu4}.

The positions labeled $n, \sigma(\CC)$, LB1, LB2, UB1 and UB2 correspond to the variables whose pairwise correlation coefficients are displayed in the correlation matrix (see Figure~\ref{fig002discussion}) as 
\[
M_{\mathscr{D}}=
\begin{pmatrix}
1.000000 & 0.954243 & 0.985349 & 0.928820 & 0.956584 & 0.959995\\
0.954243 & 1.000000 & 0.991001 & 0.996741 & 0.999966 & 0.999754\\
0.985349 & 0.991001 & 1.000000 & 0.977349 & 0.992055 & 0.993259\\
0.928820 & 0.996741 & 0.977349 & 1.000000 & 0.996057 & 0.995292\\
0.956584 & 0.999966 & 0.992055 & 0.996057 & 1.000000 & 0.999862\\
0.959995 & 0.999754 & 0.993259 & 0.995292 & 0.999862 & 1.000000
\end{pmatrix}
\]
Noticed that, regression coefficients $[\,-7.047368 \, , -4.26834614\,]$,  intercept $13518.783306538098$, Model $R^2$ is  $0.9999999997765274$.

\begin{figure}[H]
    \centering
    \includegraphics[width=0.9\linewidth]{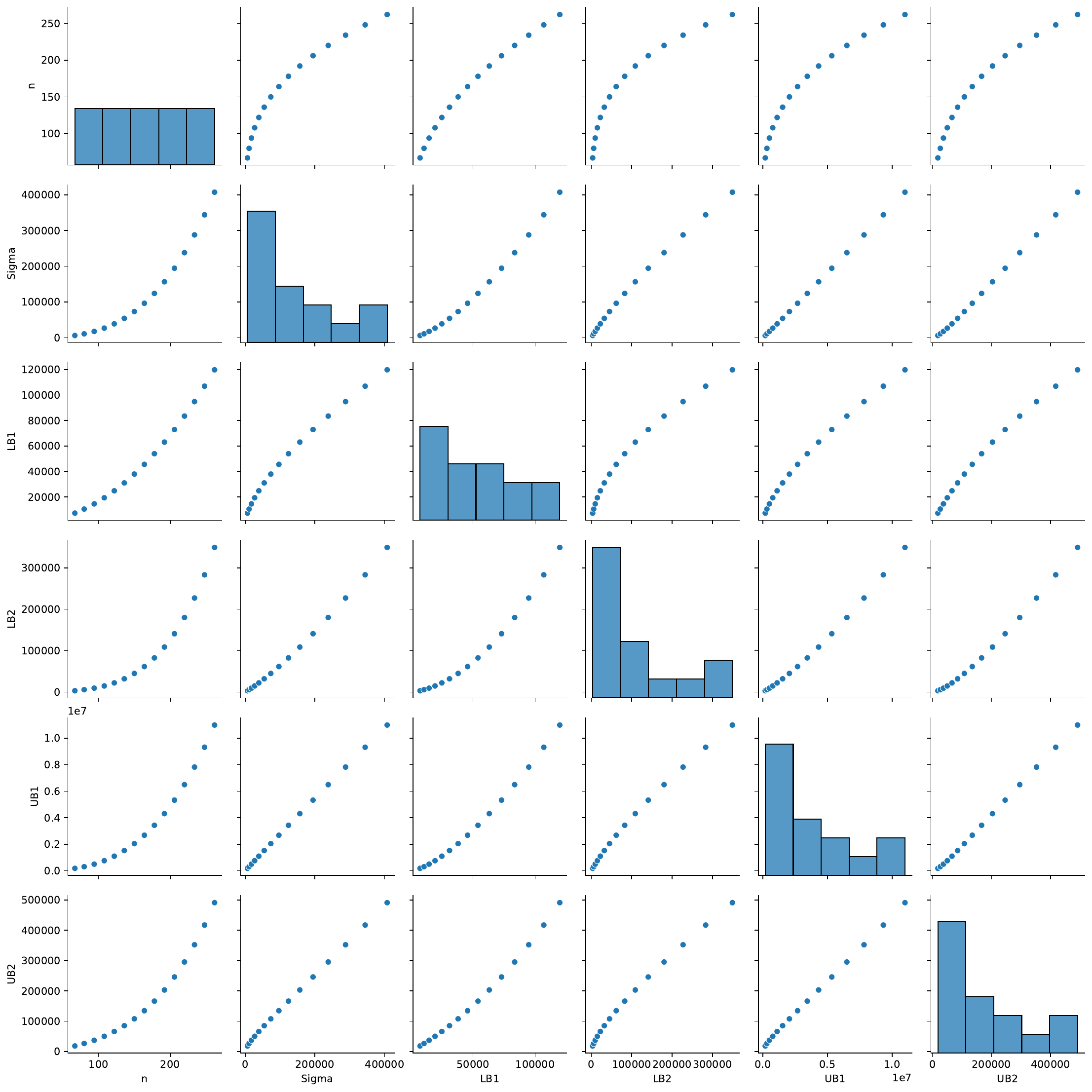}
    \caption{Analysis the extremal value of Sigma index.}
    \label{fig002discussion}
\end{figure}
\section{Conclusion}
This paper investigates bounds on the Sigma index and their connections to harmonic topological indices, as shown in Theorem~\ref{ThmMaximumSigman2}. There, we established an upper bound of
$$
\sigma(\CC)\leqslant \floor*{\frac{2n^2}{3\lambda_{\mathscr{D}}}}+\dfrac{2^\eta(m-\Delta)^2}{5(n-1)^3}+(n-1)H(G).
$$
Corollary~\ref{Tercorollaryn02} further relates the forgotten topological index to the Sigma index, giving $\sigma(\CC)\leqslant 2F(G)+2n^2+2$. These findings illustrate key extremal values affecting the Sigma index through upper and lower bounds. Lemma~\ref{powPronu2} refine these upper bounds, while Theorems~\ref{TnpowPronu4} and~\ref{TnpowPronu5} offer a particularly tight and uniform result. In Section~\ref{sec4}, we examine lower bound improvements, supported by Lemma~\ref{lemlowSigman1} and Propositions~\ref{manProtun1} and~\ref{manProtun2}. Such analysis of extremal bounds on the Sigma index highlights their potential significance for topological indices in general.

\end{document}